
\documentclass{IEEEtran4PSCC}

%

%
\usepackage{cite}

%
\ifCLASSINFOpdf
   \usepackage[pdftex]{graphicx}
\else
   \usepackage[dvips]{graphicx}
\fi
%
%

%
\usepackage[cmex10]{amsmath}
\usepackage{amsthm}
\usepackage{amsfonts}
\usepackage{bm, bbm}
\hyphenation{op-tical net-works semi-conduc-tor}

\usepackage{xcolor}
\usepackage{tikz}
\usetikzlibrary{calc}
\usetikzlibrary{shapes}
\usetikzlibrary{arrows}
\usetikzlibrary{positioning}
\usepackage{circuitikz}
\usepackage{enumerate}
\usepackage{enumitem}
\usepackage{algpseudocode}
\usepackage{algorithm}
\usepackage{empheq}
\usepackage{mathtools}
\usepackage{amssymb}

\allowdisplaybreaks

\theoremstyle{definition} 
\newtheorem{corollary}{Corollary}
\newtheorem{lemma}{Lemma}

\theoremstyle{plain} 
\newtheorem{proposition}{Proposition}
\newtheorem{theorem}{Theorem} 
\newtheorem{definition}{Definition}

\theoremstyle{remark} 
\newtheorem{remark}{Remark}

\newcommand{\norm}[1]{\left\lVert#1\right\rVert}

\DeclarePairedDelimiter\ceil{\lceil}{\rceil}

\makeatletter
\let\old@ps@headings\ps@headings
\let\old@ps@IEEEtitlepagestyle\ps@IEEEtitlepagestyle
\def\psccfooter#1{%
    \def\ps@headings{%
        \old@ps@headings%
        \def\@oddfoot{\strut\hfill#1\hfill\strut}%
        \def\@evenfoot{\strut\hfill#1\hfill\strut}%
    }%
    \def\ps@IEEEtitlepagestyle{%
        \old@ps@IEEEtitlepagestyle%
        \def\@oddfoot{\strut\hfill#1\hfill\strut}%
        \def\@evenfoot{\strut\hfill#1\hfill\strut}%
    }%
    \ps@headings%
}
\makeatother


\begin{document}
%
\title{Linear and Second-order-cone Valid Inequalities for Problems with Storage}

\author{
\IEEEauthorblockN{Juan M. Morales}
\IEEEauthorblockA{OASYS research group, University of Málaga, Spain \\
juan.morales@uma.es}
}


\maketitle

\begin{abstract}
Batteries are playing an increasingly central role as distributed energy resources in the shift toward power systems dominated by renewable energy sources. However, existing battery models must invariably rely on complementarity constraints to prevent simultaneous charging and discharging, rendering models of a disjunctive nature and NP-hard. In this paper, we analyze the disjunctive structure of the battery's feasible operational set and uncover a submodularity property in its extreme power trajectories. Leveraging this structure, we propose a systematic approach to derive linear valid inequalities that define facets of the convex hull of the battery's feasible operational set, including a distinguished family that generalizes and dominates existing formulations in the literature. To evaluate the practical utility of these inequalities, we conduct computational experiments on two representative problems whose continuous relaxations frequently result in simultaneous charge and discharge: energy arbitrage under negative prices and set-point tracking. For the latter, we further introduce second-order cone inequalities that remarkably and efficiently reduce simultaneous charging and discharging. Our results highlight the potential of structured relaxations to enhance the tractability and fidelity of battery models in optimization-based energy system tools. 
\end{abstract}

\begin{IEEEkeywords}
Energy storage systems, Batteries, Linear and convex relaxations, Complementarity constraints, Convex hull.
\end{IEEEkeywords}

\thanksto{\noindent This work was supported by the Spanish Ministry of Science and Innovation (AEI/10.13039/501100011033) through project PID2023-148291NB-I00. Juan~M.~Morales is with the research group OASYS, University of Malaga, Malaga 29071, Spain: juan.morales@uma.es. Finally, the author thankfully acknowledges the computer resources, technical expertise, and assistance provided by the SCBI (Supercomputing and Bioinformatics) center of the University of M\'alaga.}

\section{Introduction} \label{sec:intro}
Electrical energy storage (EES) has become a key component in modern power systems. This is largely due to the rapid increase in variable renewable resources and the constant need to balance electricity supply and demand in real time. Consequently, mathematical models that simulate the operation and management of energy storage are increasingly being integrated into the tools used for power system operation \cite{pozo2014unit,wen2015enhanced, li2015sufficient, lorca2016multistage, cobos2018robust, castillo2013profit, duan2016improved,garifi2020convex, chen2022battery} and planning \cite{yang2017joint,zhao2018using,zhang2018coordinated,piansky2025optimizing}.

Linear programming (LP) formulations are commonly preferred for modeling energy storage systems as they do not increase the computational complexity of the underlying optimization problem. However, pure LP models may produce infeasible operating schedules, allowing a battery to charge and discharge simultaneously within the same time interval, which is physically impossible~\cite{arroyo2020use}. To resolve this issue, these formulations must include additional complementarity constraints, which can be expressed using either bilinear equality constraints~\cite{li2015sufficient,elsaadany2023battery} or binary variables~\cite{chen2013mpc, jabr2014robust, parisio2014model,POZO2022108565,erdinc2014smart, cobos2018robust}; either approach unavoidably transforms the problem into a nonconvex program.

To preserve linearity and avoid the added complexity of complementarity constraints, various alternative modeling strategies have been explored.
For instance, the authors in~\cite{elsaadany2023battery,nazir2021guaranteeing} develop a convex inner approximation of the storage system’s original feasible set (the one that includes complementarity constraints). Consequently, while the so approximated set only includes feasible operating points, 
it may exclude other points that are both feasible and potentially optimal for the original problem.

In contrast, other researchers have established sufficient conditions under which the relaxed linear programming (LP) formulation of storage (without complementarity constraints) produces physically realizable operating schedules. These conditions depend on the specific optimization context (e.g., economic dispatch, unit commitment, or their variants). In some instances, they can be verified a priori~\cite{garifi2018control, wu2016distributed, lin2023relaxing}; in others, they depend explicitly on the solution to the relaxed problem, thus limiting their practical utility~\cite{castillo2013profit, li2015sufficient, duan2016improved, li2018extended, garifi2020convex, chen2022battery}.

More recently, Taha and Bitar~\cite{taha2024lossy} offer a novel perspective. They show that a concave, piecewise-affine, bijective mapping exists between the feasible sets of power profiles and state-of-charge (SoC) profiles. Leveraging this bijection, they eliminate the power variables from the original nonconvex feasible set, thereby obtaining a convex set expressed solely in terms of SoC variables. However, this reformulation is not without cost: the original nonconvexity remains embedded within the mapping itself. This poses a  first practical limitation to their approach, as many power system optimization problems explicitly require power variables in their formulation. Consequently, the authors restrict themselves to problems where power variables appear only in the objective function and derive conditions under which the objective remains convex, ensuring a convex optimization problem overall. Nonetheless, these conditions are often quite restrictive and, notably, do not hold for the two storage-related problems considered in the numerical experiments of this paper.

Finally, the authors in~\cite{pozo2023convex, elgersma2024tight} propose linear valid inequalities that strengthen the linear relaxation of the storage’s nonconvex feasible set, thereby reducing the incidence of simultaneous charging and discharging. Specifically, the inequalities introduced by Pozo~\cite{pozo2023convex} define facets of the convex hull of the storage’s feasible set when restricted to a single time period. These results are subsequently extended in~\cite{elgersma2024tight} to incorporate reserve provision and investment decisions.

This paper builds upon that line of research. First, we identify a submodular structure in the extreme power profiles of the nonconvex feasible set. We then leverage this structure to design a procedure for generating linear valid inequalities that define facets of the convex hull of the multi-period storage feasible set. We demonstrate that these newly derived inequalities dominate those introduced in~\cite{pozo2023convex}, effectively providing a multi-period generalization.

To evaluate the strength of the proposed valid inequalities, we conduct numerical experiments on two benchmark problems known to frequently yield simultaneous charging and discharging under a continuous relaxation: (i) energy arbitrage in the presence of negative prices and (ii) the setpoint tracking problem. For the latter, we further develop a set of second-order cone (SOC) valid inequalities based on geometric insights, which nearly eliminate instances of simultaneous charging and discharging.

The rest of this paper is organized as follows. Section~\ref{sec:Formulation} introduces the standard feasibility set that characterizes the operating points of an energy storage system. This set, widely adopted in the technical literature, is presented under three formulations: one with complementarity (bilinear) constraints, a mixed-integer equivalent reformulation, and its natural linear relaxation. Section~\ref{sec:LVI} develops a family of valid linear inequalities derived from the observation that the extreme points of the feasible set exhibit a submodular structure. Section~\ref{sec:SOCVI} focuses on the setpoint tracking problem, for which we propose an additional family of conic valid inequalities that remarkably reduces the occurrences of simultaneous charging and discharging. Section~\ref{sec:case_study} presents a numerical evaluation of the strength of the proposed inequalities through simulations of two relevant use cases: profit-maximizing arbitrage in electricity markets and the setpoint tracking problem. Finally, Section~\ref{sec:conclusions} summarizes the main findings and outlines directions for future research.

\section{Operational Feasibility Set} \label{sec:Formulation}
We begin by introducing the standard, widely used nonconvex operational feasibility set~$\mathcal{P}$ of an energy storage system. Subsequently, we provide the equivalent mixed-integer formulation~$\mathcal{P}^{0-1}$ and its natural linear relaxation~$\mathcal{P}^{\mathcal{R}}$.  

Let \( p_t^d \), \( p_t^c \), and \( s_t \) (all in \( \mathbb{R}_{\geq 0} \)) denote, respectively, the discharging power, charging power, and state of charge (SoC) of the energy storage system (ESS) at the end of time period \( t \). The set $\mathcal{P}$ can be defined as follows:

\begin{subequations}\label{eq:Pset}
  \begin{align}
      \mathcal{P} = \Big\{&\boldsymbol{p}^d, \boldsymbol{p}^c, \boldsymbol{s} \in \mathbb{R}^{T}_{\geq 0}: \notag\\ 
& \quad \boldsymbol{p}^d \leq \boldsymbol{1}\, \overline{P}^d\label{eq:STP_max_dis}\\
&\quad \boldsymbol{p}^c \leq \boldsymbol{1}\, \overline{P}^c\label{eq:STP_max_ch}\\
&\quad \boldsymbol{1}\, \underline{S} \leq \boldsymbol{s} \leq \boldsymbol{1}\, \overline{S}\label{eq:STP_max_level}\\
& \quad s_{t} = s_{t-1} + \Delta(\eta_{c} p_t^c - \frac{1}{\eta_d} p_t^d), \quad t = 1, \ldots, T \label{eq:STP_dynamic_state}\\
& \quad  \bm{p}^d \cdot \bm{p}^{c} = 0\Big\}  \label{eq:STP_comp} 
  \end{align}  
\end{subequations}

In~\eqref{eq:Pset}, $\overline{P}^d$ and $\overline{P}^c$ denote the maximum discharging and charging power, in that order, with $\boldsymbol{1}$ being a column vector of size $T$ (inequalities should be interpreted component-wise). Similarly, $\underline{S}$ and $\overline{S}$ (with $\overline{S} > \underline{S}$) represent the minimum and maximum amount of energy that can be stored in the ESS, respectively.

Equation~\eqref{eq:STP_dynamic_state} describes the time evolution of the ESS state-of-charge, where the scalar parameters $\eta_d \in (0,1]$, $\eta_c \in (0,1]$, and $\Delta > 0$ represent the ESS discharging and charging efficiencies, and the duration of a time period, respectively. Furthermore, the initial state-of-charge of the ESS, $s_0 \in \mathbb R_{+}$, is assumed to be given. The complementarity constraints~\eqref{eq:STP_comp} enforce that the ESS cannot be charged and discharged simultaneously within the same time period. 
These constraints make $\mathcal{P}$ a disjunctive set, which admits the following equivalent mixed-integer formulation~\cite{pozo2023convex}:
\begin{subequations}\label{eq:Pset_MIP}
  \begin{align}
      \mathcal{P}^{0\text{-}1} = \Big\{&\boldsymbol{p}^d, \boldsymbol{p}^c, \boldsymbol{s} \in \mathbb{R}^{T}_{\geq 0}, \boldsymbol{u} \in \{0,1\}^{T}: \notag\\ 
&  \boldsymbol{p}^d \leq \overline{P}^d \boldsymbol{(1-u)} \label{eq:STP_max_dis_MIP}\\
& \boldsymbol{p}^c \leq  \overline{P}^c \boldsymbol{u}\label{eq:STP_max_ch_MIP}\\
& \boldsymbol{1}\, \underline{S} \leq \boldsymbol{s} \leq \boldsymbol{1}\, \overline{S}\label{eq:STP_max_level_MIP}\\
&  s_{t} = s_{t-1} + \Delta(\eta_{c} p_t^c - \frac{1}{\eta_d} p_t^d), \  t = 1, \ldots, T\Big\} \label{eq:STP_dynamic_state_MIP}
  \end{align}  
\end{subequations}
A binary variable $u_t$ is assigned to each time period to enforce mutual exclusivity between charging and discharging, where $u_t = 1$ allows for charging and $u_t = 0$ for discharging.

The natural linear relaxation of the set $\mathcal{P}^{0\text{-}1}$ is thus given by
  \begin{align}
      \mathcal{P}^{\mathcal{R}} = \Big\{\boldsymbol{p}^d, \boldsymbol{p}^c, \boldsymbol{s} \in \mathbb{R}^{T}_{\geq 0}, \boldsymbol{u} \in \ [0,1]^{T}: \eqref{eq:STP_max_dis_MIP}-\eqref{eq:STP_dynamic_state_MIP}\Big\}\label{eq:Prelax}
  \end{align}  

To finish this section, we define the \emph{effective maximum discharge and charge rates}, $\overline{P}^d_e$ and $\overline{P}^c_e$, as
\begin{subequations}
    \begin{align}
        & \overline{P}^d_e = \min\left\{\overline{P}^d, \eta_d\frac{\overline{S}-\underline{S}}{\Delta}\right\} \leq \overline{P}^d\\
        & \overline{P}^c_e = \min\left\{\overline{P}^c, \frac{\overline{S}-\underline{S}}{\Delta \eta_c}\right\} \leq \overline{P}^c        
    \end{align}
\end{subequations}
and note that, if we replace $\overline{P}^d$ and $\overline{P}^c$ with $\overline{P}^d_e$ and $\overline{P}^c_e$ in~\eqref{eq:Pset} and~\eqref{eq:Pset_MIP}, the sets $\mathcal{P}$ and $\mathcal{P}^{0-1}$ remain unaltered, while the resulting linear relaxation $\mathcal{P}^{\mathcal{R}}$ is tightened (shrunk). 

\section{Linear Valid Inequalities} \label{sec:LVI}
In this section, we introduce a family of linear inequalities that are valid for \( \mathcal{P} \). To establish the validity of these inequalities, we leverage the fact that the extreme points of \( \mathcal{P} \) exhibit a submodular structure. Accordingly, we first recall some well-known concepts that are needed to introduce the notion of submodularity.
%
%
%
\begin{definition}[Monotone non-increasing]
    A set function $f: 2^{V} \longrightarrow \mathbb{R}$ is \emph{monotone non-increasing}, if for all $A \subset B \subseteq V$, we have $f(A) \geq f(B)$.
\end{definition}
\begin{definition}[Gain]
    Given a set function $f: 2^{V} \longrightarrow \mathbb{R}$, the \emph{gain} $f(j|A)$ of an element $j \in V$ in context $A \subset V$ is defined as
    $$f(j|A) := f(A\cup j) - f(A)$$

Similarly, given any sets $A, B \subseteq V$, the gain of the group or set $A \subseteq V$ in context $B \subseteq V$ is defined as
$$f(A|B) := f(A\cup B) - f(B)$$
\end{definition}

\begin{definition}[Submodular function]
    A set function $f: 2^{V} \longrightarrow \mathbb{R}$ is \emph{submodular} if for any $A \subseteq B \subseteq V$, and $j \in V \setminus B$, we have that:
     $$f(j|A) \geq f(j|B)$$
That is, the gain of element $j$ is a monotone non-increasing function. Equivalently, $f$ is submodular iff 
$$f(j|A) \geq f(j|A \cup\{k\}), \ \forall A \subseteq V$$ with $j \in V \setminus (A \cup \{k\}).$
\end{definition}

The following proposition, which is relevant in its own right, will be used as an intermediate result to prove the validity of the proposed linear inequalities.
\begin{proposition}\label{prop:submodularity}
    Define the ordered set $\mathcal{T} = \{0, 1, \ldots, \overline{\tau}\}$ and consider the set function $f: 2^{\mathcal{T}} \longrightarrow \mathbb{R}$ given by
\begin{equation}\label{eq:LVIopt}
f(\Omega):=  \max_{(\boldsymbol{p}_t^d, \boldsymbol{p}_t^c, \boldsymbol{s}_t) \in \mathcal{P}_{\overline{\tau}}(\Omega)}  \quad \sum_{\tau =0}^{\overline{\tau}} p_{t+\tau}^c
\end{equation}
where 
\begin{subequations}
  \begin{align}
      \mathcal{P}_{\overline{\tau}}&(\Omega) = \Big\{\boldsymbol{p}_t^d, \boldsymbol{p}_t^c, \boldsymbol{s}_t \in \mathbb{R}^{\overline{\tau}+1}_{\geq 0}: \notag\\ 
& \boldsymbol{p}_t^d \leq \boldsymbol{1}\, \overline{P}^d_e\\
&\boldsymbol{p}_t^c \leq \boldsymbol{1}\, \overline{P}^c_e\\
& \boldsymbol{1}\, \underline{S} \leq \boldsymbol{s}_t \leq \boldsymbol{1}\, \overline{S}\\
&  s_{t+\tau} = s_{t+\tau-1} + \Delta(\eta_{c} p_{t+\tau}^c - \frac{p_{t+\tau}^d}{\eta_d} ), \ \tau = 0, \ldots, \overline{\tau}\\
&   \bm{p}_{t+\tau}^d = 0, \enskip \forall \tau \in \Omega\\
&   \bm{p}_{t+\tau}^c = 0, \enskip \forall \tau \notin \Omega\Big\}  
  \end{align}  
\end{subequations}
with $\Omega \subseteq \mathcal{T}$ and given $s_{t-1} = s^{*} \geq \underline{S}$.
\end{proposition}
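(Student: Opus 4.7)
My plan is to prove submodularity of $f$ via a greedy characterization of the optimum in \eqref{eq:LVIopt}, monotonicity of the induced state-of-charge trajectory in $\Omega$, and a coupled analysis of marginal gains. Recall that submodularity is equivalent to the diminishing-returns property $f(\Omega \cup \{j\}) - f(\Omega) \geq f(\Omega \cup \{j,l\}) - f(\Omega \cup \{l\})$ for all $\Omega \subseteq \mathcal{T}$ and distinct $j,l \in \mathcal{T}\setminus\Omega$, so it suffices to show that the marginal gain $m(\Omega,j) := f(\Omega \cup \{j\}) - f(\Omega)$ is non-increasing in $\Omega$ for each fixed $j$.

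First I would establish that the maximum in \eqref{eq:LVIopt} is attained by the greedy rule $p^c_{t+\tau} = \min\{\overline{P}^c_e,\, (\overline{S}-s_{t+\tau-1})/(\Delta\eta_c)\}$ for $\tau \in \Omega$ and $p^d_{t+\tau} = \min\{\overline{P}^d_e,\, \eta_d (s_{t+\tau-1}-\underline{S})/\Delta\}$ for $\tau \notin \Omega$. Charging aggressively is optimal by a left-shift exchange: any unused charging capacity at an earlier $\tau \in \Omega$ can be borrowed from a later allowed period without altering the total. Maximal discharging at disallowed periods is free of cost because discharging does not enter the objective and only creates more headroom for later charging. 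A straightforward induction on $\tau$ then yields the monotonicity property: if $\Omega_1 \subseteq \Omega_2$, then $s^{\Omega_1}_{t+\tau} \leq s^{\Omega_2}_{t+\tau}$ for every $\tau$. The key observation is that the greedy updates $s \mapsto \min\{\overline{S},\, s + \Delta\eta_c\overline{P}^c_e\}$ (charging) and $s \mapsto \max\{\underline{S},\, s - \Delta\overline{P}^d_e/\eta_d\}$ (discharging) are both non-decreasing in $s$, so the inductive hypothesis is preserved when both trajectories take the same action; at $\tau \in \Omega_2 \setminus \Omega_1$, the $\Omega_2$-trajectory charges while the $\Omega_1$-trajectory discharges, which only widens the gap.

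With these two ingredients, and using that the greedy trajectories for $\Omega$ and $\Omega \cup \{j\}$ coincide for $\tau < j$, the marginal gain decomposes as
\[
m(\Omega, j) = p^{c,\Omega\cup\{j\}}_{t+j} + \sum_{\tau > j,\ \tau \in \Omega}\bigl(p^{c,\Omega\cup\{j\}}_{t+\tau} - p^{c,\Omega}_{t+\tau}\bigr),
\]
whose first term equals $\min\{\overline{P}^c_e,\, (\overline{S}-s^\Omega_{t+j-1})/(\Delta\eta_c)\}$ and is non-increasing in $\Omega$ by monotonicity, while the second term is non-positive because higher SoC leaves less room for future charging. To conclude submodularity, I would couple the perturbation gap $\phi^\Omega_\tau := s^{\Omega\cup\{j\}}_{t+\tau} - s^\Omega_{t+\tau}$ with its analogue for $\Omega \cup \{l\}$ and argue, by a period-by-period case split on whether each greedy step is binding on $\overline{P}^c_e$ or on $\overline{S}$ (symmetrically on $\overline{P}^d_e$ or $\underline{S}$), that $\phi^{\Omega \cup \{l\}}_\tau \leq \phi^\Omega_\tau$ at every $\tau$. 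Combining this gap dominance with the displayed decomposition of $m(\Omega,j)$ yields $m(\Omega \cup \{l\}, j) \leq m(\Omega, j)$, which is the required diminishing-returns inequality.

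The main obstacle is precisely the coupled-induction step establishing $\phi^{\Omega \cup \{l\}}_\tau \leq \phi^\Omega_\tau$. The subtlety is that saturation of the battery (SoC hitting $\overline{S}$ or $\underline{S}$) collapses the gap in the larger-$\Omega$ trajectory but possibly not in the smaller one, and the argument must verify that saturation in the larger-$\Omega$ trajectory occurs no later than in the smaller one and that the dominance is preserved in each of the four binding-regime combinations. Carrying out this case analysis cleanly, while keeping track of the two asymmetric transitions (charging and discharging) and the interaction with the existing ordering $s^\Omega \leq s^{\Omega \cup \{l\}}$, is the bulk of the technical work.
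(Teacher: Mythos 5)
Your two preparatory ingredients --- the greedy characterization of the optimum of \eqref{eq:LVIopt} and the monotonicity $s^{\Omega_1}_{t+\tau}\le s^{\Omega_2}_{t+\tau}$ for $\Omega_1\subseteq\Omega_2$ --- are sound and coincide with the backbone of the paper's own proof. The genuine gap is the step you yourself defer as ``the bulk of the technical work'': the pointwise gap dominance $\phi^{\Omega\cup\{l\}}_\tau\le\phi^{\Omega}_\tau$ is not only unproven, it is false as stated. Take $\underline{S}=0$, $\overline{S}=10$, $\Delta=\eta_c=\eta_d=1$, $\overline{P}^c_e=2$, $\overline{P}^d_e=5$, $s^{*}=3$, $\mathcal{T}=\{0,1,2\}$, $\Omega=\emptyset$, $j=1$, $l=0$. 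The greedy trajectories give $s^{\{1\}}_{t+1}-s^{\emptyset}_{t+1}=2-0=2$, whereas $s^{\{0,1\}}_{t+1}-s^{\{0\}}_{t+1}=7-0=7$, so $\phi^{\Omega\cup\{l\}}_{1}=7>2=\phi^{\Omega}_{1}$ (and likewise $\phi^{\Omega\cup\{l\}}_{2}=2>0=\phi^{\Omega}_{2}$). The mechanism is the asymmetric saturation you anticipated, but it acts against your claim: when the SoC entering period $j$ is low, the discharge that $j$ would otherwise perform is truncated at $\underline{S}$, so converting $j$ into a charging period opens only a small gap; when the SoC entering $j$ is higher (because $l$ was added), the discharge is not truncated and the gap opened can be as large as $\Delta\eta_c\overline{P}^c_e+\Delta\overline{P}^d_e/\eta_d$. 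Submodularity still holds in this instance ($m(\emptyset,1)=m(\{0\},1)=2$) because the larger gap is later reabsorbed at discharging periods, which do not enter the objective --- but this shows that deriving $m(\Omega\cup\{l\},j)\le m(\Omega,j)$ from per-period gap dominance cannot work as described; what must be compared is not the SoC gap itself but how much of it is absorbed at future \emph{charging} periods, net of the immediate charge at $j$.

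For the repair, note how the paper closes the argument without any pointwise coupling of perturbed trajectories: it writes the incremental gain in closed form, $f(\tau|\Omega_{\le\tau-1})=\min\{(\overline{S}-s_{t+\tau-1}(\Omega_{\le\tau-1}))/(\Delta\eta_c),\,\overline{P}^c_e\}$, and then repeatedly applies the identity $f(A|B\cup C)=f(C|A\cup B)+f(A|B)-f(C|B)$, invoking only SoC monotonicity, to show that the gain of a fixed $\tau$ cannot increase when elements are inserted after $\tau$, before $\tau$, or between existing charging periods. If you prefer to keep your coupled-trajectory viewpoint, the statement you would need is an aggregated one --- a bound on $\sum_{\tau>j,\ \tau\in B}\bigl(p^{c,B}_{t+\tau}-p^{c,B\cup\{j\}}_{t+\tau}\bigr)$ combined with the first term of your decomposition, for $B=\Omega$ versus $B=\Omega\cup\{l\}$ --- rather than the per-period inequality $\phi^{\Omega\cup\{l\}}_\tau\le\phi^{\Omega}_\tau$, and establishing that aggregated comparison is essentially equivalent in difficulty to the paper's conditional-gain manipulations.
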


Function $f$ is submodular.
\begin{proof}
    See Appendix~\ref{app:submodular}
\end{proof}

\begin{corollary}
   The set function $g: 2^{\mathcal{T}} \longrightarrow \mathbb{R}$ given by
\begin{equation}\label{eq:LVIopt_dis}
g(\Omega):=  \max_{(\boldsymbol{p}_t^d, \boldsymbol{p}_t^c, \boldsymbol{s}_t) \in \mathcal{P}_{\overline{\tau}}(\mathcal{T}\setminus\Omega)}  \quad \sum_{\tau =0}^{\overline{\tau}} p_{t+\tau}^d
\end{equation} 
is submodular.
\end{corollary}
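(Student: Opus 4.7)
My plan is to reduce the corollary to Proposition~\ref{prop:submodularity} through a charge/discharge duality. Concretely, I would apply the change of variables $\tilde{p}^{c}_{t+\tau} := p^{d}_{t+\tau}$, $\tilde{p}^{d}_{t+\tau} := p^{c}_{t+\tau}$, and $\tilde{s}_{t+\tau} := \overline{S}+\underline{S}-s_{t+\tau}$. Because the reflection $s \mapsto \overline{S}+\underline{S}-s$ maps $[\underline{S},\overline{S}]$ onto itself, the SoC bounds are preserved and the reflected initial state $\tilde{s}_{t-1} = \overline{S}+\underline{S}-s^{\ast}$ is admissible provided $s^{\ast} \in [\underline{S},\overline{S}]$, which is the natural setting of Proposition~\ref{prop:submodularity}. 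Substituting into the SoC recursion produces $\tilde{s}_{t+\tau} = \tilde{s}_{t+\tau-1}+\Delta(\tilde{\eta}_{c}\tilde{p}^{c}_{t+\tau}-\tilde{p}^{d}_{t+\tau}/\tilde{\eta}_{d})$ with dual efficiencies $\tilde{\eta}_{c} = 1/\eta_{d}$ and $\tilde{\eta}_{d} = 1/\eta_{c}$, and the disjunctive equalities $\bm{p}^{d} = 0$ on $\mathcal{T}\setminus\Omega$ and $\bm{p}^{c} = 0$ on $\Omega$ that define $\mathcal{P}_{\overline{\tau}}(\mathcal{T}\setminus\Omega)$ become $\tilde{\bm{p}}^{c} = 0$ on $\mathcal{T}\setminus\Omega$ and $\tilde{\bm{p}}^{d} = 0$ on $\Omega$, which is exactly the template of $\mathcal{P}_{\overline{\tau}}(\Omega)$ for the dual battery.

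Since $\sum_{\tau} p^{d}_{t+\tau} = \sum_{\tau} \tilde{p}^{c}_{t+\tau}$, the objectives coincide, so $g(\Omega)$ equals the charge-maximization value $\tilde{f}(\Omega)$ of the dual battery on exactly the same index set $\Omega \subseteq \mathcal{T}$. Submodularity of $\tilde{f}$ on $2^{\mathcal{T}}$ is then a direct application of Proposition~\ref{prop:submodularity}, which immediately transfers to $g$.

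The step that requires the most care is confirming that the dual battery is a legitimate instance for Proposition~\ref{prop:submodularity}, since $\tilde{\eta}_{c},\tilde{\eta}_{d}\geq 1$ whenever the primal efficiencies are strictly less than one, and the effective power limits $\tilde{\overline{P}}^{c}_{e},\tilde{\overline{P}}^{d}_{e}$ of the dual battery therefore arise from different formulas than $\overline{P}^{c}_{e},\overline{P}^{d}_{e}$. I would address this by verifying that the proof in Appendix~\ref{app:submodular} uses only the polyhedral structure of $\mathcal{P}_{\overline{\tau}}(\cdot)$ and not the numerical bound $\eta_{c},\eta_{d}\leq 1$; a convenient way to make this structural nature apparent is to first absorb the efficiencies into the power variables via the rescaling $\hat{p}^{c}:=\eta_{c}p^{c}$, $\hat{p}^{d}:=p^{d}/\eta_{d}$, which reduces both the primal and the dual problem to unit-efficiency batteries and renders the duality self-evident. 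Should the proof of Proposition~\ref{prop:submodularity} nonetheless rely irreducibly on $\eta_{c},\eta_{d}\leq 1$, the fallback is to mirror its exchange argument verbatim with the roles of $\bm{p}^{c}$ and $\bm{p}^{d}$ swapped throughout, replacing the statement \emph{granting an extra charging slot can only raise future SoC} by its symmetric counterpart \emph{granting an extra discharging slot can only lower future SoC}.
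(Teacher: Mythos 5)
Your argument is correct and is essentially the paper's own proof: the paper disposes of this corollary with the single line ``the proof follows by symmetry,'' and your charge/discharge swap with the reflection $s \mapsto \overline{S}+\underline{S}-s$ is precisely that symmetry made explicit. Your extra care about the inverted efficiencies $\tilde{\eta}_c = 1/\eta_d$, $\tilde{\eta}_d = 1/\eta_c$ is well placed but harmless, since the appendix proof of Proposition~\ref{prop:submodularity} only uses positivity of the charge/discharge coefficients (never $\eta_c,\eta_d\leq 1$), so any of your three routes closes the argument.
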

\begin{proof}
    The proof follows by symmetry.
\end{proof}



Next we use Proposition~\ref{prop:submodularity} to construct a particular family of linear valid inequalities that dominate and generalize those introduced in~\cite{pozo2023convex}.

\begin{theorem}\label{th:LVI}
Starting from $\underline{s}_{0}(0) = \overline{s}_{0}(0) = s_0$, compute recursively 
\begin{align}
\underline{s}_{0}(t) & = \max\left\{\underline{s}_0(t-1) - \frac{\Delta}{\eta_d} \overline{P}^d_e, \ \underline{S}\right\}, \enskip t = 1,\ldots,T-1\\
\overline{s}_{0}(t) & = \min\left\{\overline{s}_0(t-1) + \Delta\eta_c \overline{P}^c_e, \ \overline{S} \right\},  \enskip t = 1,\ldots,T-1
\end{align}

The linear inequalities
\begin{align}
   \sum_{\tau =0}^{\overline{\tau}} p_{t+\tau}^{c} + &\overline{\rho}^c(t,\tau,\overline{\tau})\, p_{t+\tau}^{d} \leq \sum_{\tau=0}^{\overline{\tau}} c(t,\tau), \label{eq:VIc}\\ 
    \sum_{\tau =0}^{\overline{\tau}} p_{t+\tau}^{d} + &\overline{\rho}^d(t,\tau,\overline{\tau})\, p_{t+\tau}^{c} \leq \sum_{\tau=0}^{\overline{\tau}} d(t,\tau),\label{eq:VId}\\ 
   &\forall t=1, \ldots, T, \enskip \overline{\tau} = 0, \ldots, T-t \notag
\end{align}
where

\begin{align*}
%
 &c(t,\overline{\tau}) = \min\left\{\overline{P}^c_e, \left[\frac{\overline{S}-\underline{s}_{0}(t-1)}{\Delta\eta_{c}} - \overline{\tau} \, \overline{P}^c_e\right]^+\right\}\\
 &d(t,\overline{\tau}) = \min\left\{\overline{P}^d_e, \left[\frac{\overline{s}_{0}(t-1)-\underline{S}}{\Delta}\eta_{d} - \overline{\tau} \, \overline{P}^d_e\right]^+\right\} \\
 &\hspace{1.5cm}t = 1, \ldots, T; \quad \overline{\tau} = 0, \ldots, T-t\\
  &\overline{\rho}^{c}(t,\tau,\overline{\tau}) = \begin{cases}
-1/(\eta_d\eta_c) & \text{if } \rho^{c}(t,\tau,\overline{\tau}) \leq 0 \\
\rho^{c}(t,\tau,\overline{\tau})/\overline{P}^d_e(t+\tau) & \text{otherwise } 
\end{cases}\\
 &\overline{\rho}^{d}(t,\tau,\overline{\tau}) = \begin{cases}
-\eta_d\eta_c & \text{if } \rho^{d}(t,\tau,\overline{\tau}) \leq 0 \\
\rho^{d}(t,\tau,\overline{\tau})/\overline{P}^c_e(t+\tau) & \text{otherwise } 
\end{cases}\\
 &\rho^{c}(t,\tau,\overline{\tau}) =\max\Bigg\{- \frac{\overline{P}^d_e(t+\tau)}{\eta_d\eta_c}, \enskip\sum_{j = \tau}^{\overline{\tau}}  c(t,j) - \sum_{j = 0}^{\overline{\tau}-\tau-1} \overline{c}(j)\Bigg\}\\
%
%
 &\rho^{d}(t,\tau,\overline{\tau}) =\max\!\Bigg\{\!\!- \eta_d\eta_c\overline{P}^c_e(t+\tau),  \sum_{j = \tau}^{\overline{\tau}}  d(t,j) - \!\!\!\!\sum_{j = 0}^{\overline{\tau}-\tau-1}\! \overline{d}(j)\Bigg\}\\
&\hspace{1.5cm}t = 1, \ldots, T; \quad \overline{\tau} = 0, \ldots, T-t; \quad  \tau = 0, \ldots, \overline{\tau}\\
 &\overline{c}(\tau) = \min\left\{\overline{P}^c_e, \left[\frac{\overline{S}-\underline{S}}{\Delta\eta_{c}} - \tau \, \overline{P}^c_e\right]^+\right\}, \ \tau = 0, \ldots, T-2\\ 
 &\overline{d}(\tau) = \min\left\{\overline{P}^d_e, \left[\frac{\overline{S}-\underline{S}}{\Delta}\eta_{d} - \tau \, \overline{P}^d_e\right]^+\right\}, \ \tau = 0, \ldots, T-2 \\
 &\overline{P}_e^{c}(t)  = c(t,0) , \enskip t = 1,\ldots,T\\
&\overline{P}_e^{d}(t)  = d(t,0) , \enskip t = 1,\ldots,T
\end{align*}
are valid for the feasible set $\mathcal{P}$.
\end{theorem}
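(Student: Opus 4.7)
The plan is to prove validity by reducing each inequality to the submodular optimization framework of Proposition~\ref{prop:submodularity}, interpreting the constants $c(t,\tau)$ and $d(t,\tau)$ as worst-case marginal charging and discharging capacities and the coefficients $\overline{\rho}^c$, $\overline{\rho}^d$ as the effective ``discharge-to-charge'' and ``charge-to-discharge'' conversion rates induced by the round-trip efficiency and the SoC box constraints. First I will verify that $\sum_{\tau=0}^{\overline{\tau}} c(t,\tau)$ coincides with $f(\{0,1,\ldots,\overline{\tau}\})$ in the setting of Proposition~\ref{prop:submodularity} when the initial SoC $s^{*}$ is fixed at the worst-case (lowest) value $\underline{s}_{0}(t-1)$. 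Since $\underline{s}_{0}(t-1)$ is, by construction, the tightest lower bound on $s_{t-1}$ reachable from $s_{0}$ by discharging at the maximum effective rate, this choice makes the bound as loose as possible in favour of charging and therefore remains valid for any $s_{t-1}$ consistent with~$\mathcal{P}$.

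Next I will dispose of the pure-charging case: assuming $p_{t+\tau}^{d}=0$ for all $\tau$, the inequality~\eqref{eq:VIc} collapses to $\sum_{\tau=0}^{\overline{\tau}} p_{t+\tau}^{c}\le \sum_{\tau=0}^{\overline{\tau}} c(t,\tau)$, which follows from the interpretation above together with a greedy (myopic) argument: the submodularity of $f$ implies that any feasible charging schedule is dominated by the one that charges at the maximum effective rate in every period, whose cumulative value is precisely $\sum_{\tau=0}^{\overline{\tau}} c(t,\tau)$. For the general case I will take any feasible $(\boldsymbol{p}^{c},\boldsymbol{p}^{d},\boldsymbol{s})\in\mathcal{P}$ and use complementarity to partition $\{0,\ldots,\overline{\tau}\}$ into a charging subset $\Omega$, a discharging subset $D$, and idle periods. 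Each unit of discharge in $D$ lowers the SoC in subsequent periods and thus enlarges the admissible charging envelope; the coefficient $\overline{\rho}^{c}(t,\tau,\overline{\tau})$ must encode exactly this enlargement. In the regime $\rho^{c}(t,\tau,\overline{\tau})\le 0$ the remaining charging window is entirely unsaturated, so one unit of discharge in period $t+\tau$ creates exactly $1/(\eta_{d}\eta_{c})$ units of extra charging room, justifying $\overline{\rho}^{c}=-1/(\eta_{d}\eta_{c})$; in the complementary regime the upper SoC bound $\overline{S}$ becomes binding in at least one later period and $\rho^{c}/\overline{P}^{d}_{e}$ yields the correctly diminished recovery.

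The main obstacle I anticipate is formalising this marginal-recovery accounting uniformly across both regimes of $\overline{\rho}^{c}$ and showing that the truncated maximum in $\rho^{c}(t,\tau,\overline{\tau})$---against $-\overline{P}^{d}_{e}(t+\tau)/(\eta_{d}\eta_{c})$ on one side and against the cumulative saturation slack $\sum_{j=\tau}^{\overline{\tau}} c(t,j)-\sum_{j=0}^{\overline{\tau}-\tau-1}\overline{c}(j)$ on the other---matches exactly the gap between $f(\Omega\cup\{\tau\})-f(\Omega)$ and the post-discharge marginal $f$-value. Submodularity is precisely the concavity-like property needed to turn these pointwise marginal estimates into a valid global inequality over $\Omega$, with the constants $\overline{c}(j)$ and $\overline{d}(j)$ playing the role of the submodular gain of an extra ``clean'' charging or discharging slot appended at the end of the window. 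Once~\eqref{eq:VIc} is established, \eqref{eq:VId} follows by the fully symmetric argument, now using the submodular function $g$ from the Corollary and swapping the roles of charging and discharging, of $\overline{P}^{c}_{e}\leftrightarrow\overline{P}^{d}_{e}$, $\eta_{c}\leftrightarrow\eta_{d}$, and of $\overline{S}\leftrightarrow\underline{S}$ through the SoC dynamics.
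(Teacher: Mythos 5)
Your plan sits on exactly the paper's track: the paper also proves validity by fixing the window $t,\ldots,t+\overline{\tau}$, taking the worst-case initial level $s_{t-1}\in[\underline{s}_{0}(t-1),\overline{s}_{0}(t-1)]$, identifying $\sum_{\tau}c(t,\tau)$ with the maximum cumulative charge when no period may discharge, reading the coefficients as (negatives of) singleton marginal gains, and obtaining \eqref{eq:VId} by symmetry. However, what you label ``the main obstacle I anticipate'' is not a technicality to be formalised later --- it \emph{is} the proof, and your proposal does not supply it. Two concrete pieces are missing. First, the aggregation mechanism: the paper does not argue pointwise over an arbitrary charging/discharging partition of a feasible profile; it passes to the complement set function $\overline{f}(A):=f(\mathcal{T}\setminus A)$ (discharging set $\mapsto$ maximum cumulative charge), which is submodular because $f$ is, and invokes the diminishing-returns bound $\overline{f}(B)\le\overline{f}(\emptyset)+\sum_{\tau\in B}\overline{f}(\tau|\emptyset)$ with $A=\emptyset$. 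Your appeal to ``submodularity as a concavity-like property'' gestures at this but never states which inequality is used, so the step that turns per-period marginal estimates into a single valid inequality over the whole window is absent. Second, the marginals themselves are never computed: validity of \eqref{eq:VIc} with the stated constants requires showing $\overline{f}(\emptyset)=\sum_{\tau=0}^{\overline{\tau}}c(t,\tau)$ and, crucially, $\overline{f}(\tau)=\sum_{j=0}^{\tau-1}c(t,j)+\min\bigl\{\sum_{j=\tau}^{\overline{\tau}}c(t,j)+\overline{P}^d_e(t+\tau)/(\eta_d\eta_c),\ \sum_{j=0}^{\overline{\tau}-\tau-1}\overline{c}(j)\bigr\}$, whence $-\overline{f}(\tau|\emptyset)=\rho^{c}(t,\tau,\overline{\tau})$; without this identity the truncated-max formula for $\rho^{c}$ is unverified. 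Related to this, you still need the bridge from the set-function bound (which only speaks of 0/1 discharging patterns) to a linear inequality in the continuous variables $p^{d}_{t+\tau}$: the paper does this by dividing each gain by a sign-dependent normaliser $\delta_{t+\tau}$ ($\overline{P}^d_e(t+\tau)$ when the gain is negative, $\eta_c\eta_d\,\overline{f}(\tau|\emptyset)$ when it is nonnegative, i.e., the minimum discharge realising the full gain), and $\overline{\rho}^{c}=\rho^{c}/\delta_{t+\tau}$. Your proposal asserts the resulting slopes but gives no argument that the inequality remains valid for fractional discharge levels in each regime.

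Two smaller corrections to your reading of the constants: $\overline{c}(j)$ is not ``the gain of an extra clean slot appended at the end of the window''; it is the per-period charging capacity $j$ periods after the storage sits at its minimum level $\underline{S}$, and it caps the post-discharge recharging inside the expression for $\overline{f}(\tau)$ above. And in the regime $\rho^{c}(t,\tau,\overline{\tau})>0$ the coefficient is not a ``diminished recovery'' but a genuine penalty: there the marginal gain $\overline{f}(\tau|\emptyset)$ is negative, because surrendering period $\tau$ as a charging period costs more than the freed headroom can recover in the remaining $\overline{\tau}-\tau$ periods, so discharging at $t+\tau$ strictly tightens the bound on cumulative charge.
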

\begin{proof}
We will prove that \eqref{eq:VIc} and \eqref{eq:VId} are valid for the superset of $\mathcal{P}$ that is obtained when only the periods $t, \ldots, t+\tau, \ldots, t + \overline{\tau}$ are considered  and the energy level of the storage at $t-1$ is constrained to take values within the interval $[\underline{s}_{0}(t-1),\overline{s}_{0}(t-1)]$. In what follows, we focus on proving inequalities~\eqref{eq:VIc}, as the validity of~\eqref{eq:VId} follows by symmetry. The proof builds upon the following classical result concerning submodular functions.

\begin{lemma}[Complement function]
Given a submodular function $f: 2^{V} \longrightarrow \mathbb{R}$, its \emph{complement} function $\overline{f}: 2^{V} \longrightarrow \mathbb{R}$ defined as $\overline{f}(A):=f(V\setminus A)$, for any $A \subseteq V$ is also submodular. 
\end{lemma}

Now consider the complement $\overline{f}$ of the function $f$ in~\eqref{eq:LVIopt}, both defined on $2^{\mathcal{T}}$ with $\mathcal{T} = \{0, 1, \ldots, \overline{\tau}\}$. Since  $\overline{f}$ is submodular, it holds
\begin{equation}
\overline{f}(B) \leq  \overline{f}(A) + \sum_{\tau \in B\setminus A}\overline{f}(\tau|A)    
\end{equation}
for all $A \subseteq B \subseteq \mathcal{T}$.
In particular, if we take $A = \emptyset$, we get
\begin{equation}\label{eq:submodular_constraint}
\overline{f}(B) \leq \overline{f}(\emptyset) + \sum_{\tau \in B}\overline{f}(\tau|\emptyset)    
\end{equation}
which provides us with an upper bound on the value of function $f$ for any set $B$ of discharging periods. In fact, $\overline{f}(\tau|\emptyset)$ represents the \emph{increase} in the cumulative charging power when period $\tau$ alone becomes available for discharging. We can thus write
\begin{equation}\label{eq:sub_constr}
    \sum_{\tau =0}^{\overline{\tau}} p_{t+\tau}^{c} \leq \overline{f}(\emptyset) + \sum_{\tau =0}^{\overline{\tau}} \overline{f}(\tau|\emptyset) \frac{p_{t+\tau}^{d}}{\delta_{t+\tau}} 
\end{equation}
where $\delta_{t+\tau}$ is a parameter that must be carefully chosen for \eqref{eq:sub_constr} to be valid and tight. Indeed, if the gain $\overline{f}(\tau|\emptyset)$ is negative, then $\delta_{t+\tau}$ must be equal to the maximum value that variable $p_{t+\tau}^{d}$ can take on in the feasible set $\mathcal{P}$, that is, $\overline{P}^d_e(t+\tau)$. On the contrary, if $\overline{f}(\tau|\emptyset)$ is nonnegative, $\delta_{t+\tau}$ must be chosen as the minimum discharging power needed to produce a gain in charge equal to $\overline{f}(\tau|\emptyset)$. This is exactly $\eta_c\eta_d\overline{f}(\tau|\emptyset)$. 

On the other hand, $\overline{f}(\emptyset)$ represents the cumulative charging power when none of the periods in the set 
$\mathcal{T}$ are available for discharging the storage. Consequently, $\overline{f}(\emptyset) = \sum_{\tau = 0}^{\overline{\tau}}c(t,\tau)$ (recall that $s_{t-1} \in [\underline{s}_{0}(t-1), \overline{s}_{0}(t-1)]$).

Since $\overline{f}(\tau|\emptyset) = \overline{f}(\tau) - \overline{f}(\emptyset)$, it only remains to compute the value of $\overline{f}(\tau)$, which is straightforward as it represents the cumulative charging power when the storage is discharged at time period $\tau$ only. This cumulative amount depends both on $\tau$ and $\overline{\tau}$. Specifically,
$$\overline{f}(\tau) \!=\! \sum_{j = 0}^{\tau-1}\!c(t,j) + \min\!\left\{\sum_{j = \tau}^{\overline{\tau}}c(t,j) + \frac{\overline{P}^d_e(t+\tau)}{\eta_d\eta_c}, \sum_{j = 0}^{\overline{\tau}-\tau-1}\overline{c}(j)\!\right\}$$

Therefore, a simple arithmetic computation yields
\begin{align*}
-\overline{f}(\tau|\emptyset) & = \max\Bigg\{- \frac{\overline{P}^d_e(t+\tau)}{\eta_d\eta_c}, \ \sum_{j = \tau}^{\overline{\tau}}  c(t,j) - \sum_{j = 0}^{\overline{\tau}-\tau-1} \overline{c}(j)\Bigg\}\\
& = \rho^c(t,\tau,\overline{\tau})
\end{align*}
The proof is concluded by noticing that $\overline{\rho}^c(t,\tau,\overline{\tau}) = \frac{\rho^c(t,\tau,\overline{\tau})}{\delta_{t+\tau}}$.
\end{proof}

\begin{remark}[Coefficients' properties]
Note that $\overline{c}(0) = \overline{P}^c_e$, $\overline{d}(0) = \overline{P}^d_e$, and $c(t+1,\tau) \geq c(t, \tau), \ d(t+1,\tau) \geq d(t, \tau),\ \forall t = 0, \ldots, T-1$. In contrast, $\overline{c}(\tau) \geq \overline{c}(\tau+1)$, $\overline{d}(\tau) \geq \overline{d}(\tau+1)$, $c(t,\tau) \geq c(t, \tau+1), \ d(t,\tau) \geq d(t, \tau+1),\ \forall \tau \geq 0$.  

Furthermore, we have that $\rho^{c}(t,\tau,\overline{\tau}) =  \rho^{c}(t,\overline{\tau}-\tau,\overline{\tau})$ and $\rho^{d}(t,\tau,\overline{\tau}) =  \rho^{d}(t,\overline{\tau}-\tau,\overline{\tau})$, provided that $\underline{s}_0(t-1) = \underline{S}$ and $\overline{s}_0(t-1) = \overline{S}$, respectively. This is a direct result of the symmetry of problem~\eqref{eq:LVIopt}.
\end{remark}

\begin{remark}[Redundant constraints]\label{rmk:redundant}
    The formulae~\eqref{eq:VIc} and~\eqref{eq:VId} may produce redundant constraints. Specifically, compute
    \begin{align}
    E^c(t)&= \frac{\overline{S}-\underline{s}_{0}(t-1)}{\Delta\eta_{c} \overline{P}^c_e}, \enskip t = 1, \ldots, T\\
    \Gamma^c(t) &= 
    \begin{cases}
\ceil{E(t)}-1 & \text{if } E(t) > 0 \\
0 & \text{otherwise } 
\end{cases}, \enskip t = 1, \ldots, T
\end{align}
where $E^c(t)$ provides the number of periods needed to fully charge the storage starting from the level $\underline{s}_{0}(t-1)$ at time $t$. 

If $\Gamma^c(t) >0$, then for all $ t=1, \ldots, T$, and $\overline{\tau} = 0, \ldots, \min\{T-t,\  \Gamma^c(t)\}$, we have $c(t,\overline{\tau}) = \overline{P}^c_e$. Likewise, $\overline{c}(\tau) = \overline{P}^c_e$, for all $\tau = 0, \ldots, \min\{T-2,\  \Gamma^c(t)\}$. Therefore,
\begin{align*}
\sum_{\tau = 0}^{\overline{\tau}}  c(t,\tau) &= (\overline{\tau}+1) \overline{P}^c_e\\
\rho^{c}(t,\tau,\overline{\tau}) &=\sum_{j = \tau}^{\overline{\tau}}  c(t,j) - \sum_{j = 0}^{\overline{\tau}-\tau-1} \overline{c}(j) \\
&= (\overline{\tau}-\tau+1) \overline{P}^c_e - (\overline{\tau}-\tau) \overline{P}^c_e = \overline{P}^c_e    
\end{align*}
for all $t = 1, \ldots, T; \quad \overline{\tau} = 0, \ldots, \min\{T-t, \Gamma^c(t)\}$, and  $\tau = 0, \ldots, \overline{\tau}$. As a result, for all these cases, the family of constraints~\eqref{eq:VIc} takes the form
 $$\sum_{\tau =0}^{\overline{\tau}} p_{t+\tau}^{c} + \overline{P}^c_e\, \frac{p_{t+\tau}^{d}}{\overline{P}^d_e(t+\tau)} \leq (\overline{\tau}+1) \overline{P}^c_e$$
which is dominated by
$$ \frac{p_{t+\tau}^{c}}{{\overline{P}^c_e(t+\tau)}} + \frac{p_{t+\tau}^{d}}{\overline{P}^d_e(t+\tau)} \leq 1$$
corresponding to $\overline{\tau} = 0$.

A similar conclusion can be drawn regarding the set of constraints~\eqref{eq:VId}.
\end{remark}

Perhaps even more significant than Theorem~\ref{th:LVI} itself is that its proof yields \emph{a procedure for constructing exponentially many linear valid inequalities for 
$\mathcal{P}$} by leveraging the submodularity of functions~\eqref{eq:LVIopt} and~\eqref{eq:LVIopt_dis}. In particular, since function~\eqref{eq:LVIopt} is submodular (and the same reasoning applies to function~\eqref{eq:LVIopt_dis}), we have:
\begin{equation}\label{eq:new_family}
\overline{f}(B) \leq  \overline{f}(A) + \sum_{j \in B\setminus{A}}\overline{f}(j|A \cap B)  - \sum_{j \in A\setminus{B}}\overline{f}(j|A \setminus\{j\})  
\end{equation}
for all $A, B \subseteq \mathcal{T}$.

Now, if one takes $A = {\tau^*} \in \mathcal{T}$, a single period from $\mathcal{T}$, we get to
$$\overline{f}(B) \leq  \overline{f}(\tau^*) + \sum_{j \in B\setminus{\tau^*}}\overline{f}(j|\tau^*),$$
if $\tau^* \in B$, and
$$\overline{f}(B) \leq  \overline{f}(\tau^*) + \sum_{j \in B}\overline{f}(j|\emptyset) - \overline{f}(\tau^*|\emptyset),$$
otherwise. This leads to the following families of valid inequalities:
\begin{align}
    &\sum_{\substack{\tau = 0 \\\ \, \tau \ne \tau^*}}^{\overline{\tau}} p_{t+\tau}^{c} - \sum_{\substack{\tau = 0 \\ \ \,\tau \ne \tau^*}}^{\overline{\tau}} \overline{f}(\tau|\tau^*)\frac{p_{t+\tau}^{d}}{\delta'_{t+\tau}} \leq \overline{f}(\tau^*)\label{eq:VI_other1}\\ 
    &\left(\!1+ \frac{\overline{f}(\tau^*|\emptyset)}{\kappa_{t+\tau^*}}\right) p_{t+\tau^*}^{c} + \!\!\!\!\sum_{\substack{\tau = 0 \\ \ \,\tau \ne \tau^*}}^{\overline{\tau}} p_{t+\tau}^{c} -\!  \overline{f}(\tau|\emptyset)\frac{p_{t+\tau}^{d}}{\delta_{t+\tau}} \leq \overline{f}(\tau^*)\label{eq:VI_other2} 
\end{align}
for $t = 1, \ldots, T$; $\overline{\tau} = 1, \ldots, T-t$, and $\tau^* \in \{0, \ldots, \overline{\tau}\}$. 

In these inequalities, the parameter $\delta'_{t+\tau}$ is computed in the same manner as $\delta_{t+\tau}$, but based on the sign of $\overline{f}(\tau|\tau^*)$ rather than $\overline{f}(\tau|\emptyset)$. Similarly, the parameter $\kappa_{t+\tau^*}$ is set to $\overline{P}_{e}^{c}(t+\tau^*)$ if the gain in charge, $\overline{f}(\tau^*|\emptyset)$, is nonnegative; otherwise, it is set equal to the minus gain itself. 

As an illustration, take $\overline{\tau} = 2$, $\tau^* = 1$, and suppose that $\underline{s}_0(t-1) = \underline{S}$, $\overline{s}_0(t-1) = \overline{S}$, $\overline{P}_e^d/(\eta_d\eta_c) = 0.5\overline{P}_e^c$, and $(\overline{S}-\underline{S})/(\Delta \eta_c) = \overline{P}_e^c$. Then, \eqref{eq:VI_other1} and \eqref{eq:VI_other2} yields, respectively,
$$P_t^c+P_{t+2}^c + 0.5\frac{P_t^d+P_{t+2}^d}{\overline{P}_e^d} \leq 1.5\overline{P}_e^c$$

$$P_t^c+1.5P_{t+1}^c+P_{t+2}^c  \leq 1.5\overline{P}_e^c$$
The former holds with equality for all the extreme power profiles of $\mathcal{P}$ that maximize the cumulative charging of the storage over periods $t, t+1, t+2$ assuming that period $t+1$ is not available for charging and the storage can only be discharged either at $\{t+1\}$, $\{t,t+1\}$, or $\{t+1, t+2\}$. In contrast, the latter is satisfied with equality for all the extreme power profiles of $\mathcal{P}$ such that $p^c_{t+1} = \overline{P}_{e}^{c}$ and those that maximize the cumulative charging of the storage over periods $t, t+1, t+2$ (which occurs when the storage is discharged at $t+1$).

Furthermore, we can also exploit the inequality~\eqref{eq:submodular_constraint} on the submodular function $\overline{f}$ to construct linear valid inequalities in terms of the $\boldsymbol{u}$ variables that appear in the relaxed linear set $\mathcal{P}^{\mathcal{R}}$ and as such, can also be used to tighten the mixed-integer set $\mathcal{P}^{0-1}$. Indeed, we can simply write
\begin{equation}\label{eq:sub_constr_u}
    \sum_{\tau =0}^{\overline{\tau}} p_{t+\tau}^{c} \leq \overline{f}(\emptyset) + \sum_{\tau =0}^{\overline{\tau}} \overline{f}(\tau|\emptyset) (1-u_{t+\tau}) 
\end{equation}
which leads to
\begin{equation}\label{eq:LVIc_u}
    \sum_{\tau =0}^{\overline{\tau}} p_{t+\tau}^{c} + \sum_{\tau =0}^{\overline{\tau}} \rho^c(t,\tau,\overline{\tau}) (1-u_{t+\tau}) \leq \sum_{\tau = 0}^{\overline{\tau}}c(t,\tau) 
\end{equation}

Note that \eqref{eq:LVIc_u} is \emph{not} equivalent to \eqref{eq:VIc}. In effect, according to \eqref{eq:LVIc_u}, the gain in charge $-\rho^c(t,\tau,\overline{\tau})$ is fully available only if the storage is not charged at $t+\tau$ (i.e., $u_{t+\tau}=0$). In contrast, \eqref{eq:VIc} enforces that, for this gain to be completely realized, the storage must be discharged at rate $\delta_{t+\tau}$ at period $t+\tau$. Both constraints  are, therefore, complementary, with one being stronger than the other depending on the sign of the gain.

The counterpart to \eqref{eq:LVIc_u} in terms of function $g$ is 
\begin{equation}\label{eq:LVId_u}
    \sum_{\tau =0}^{\overline{\tau}} p_{t+\tau}^{d} + \sum_{\tau =0}^{\overline{\tau}} \rho^d(t,\tau,\overline{\tau}) u_{t+\tau} \leq \sum_{\tau = 0}^{\overline{\tau}}d(t,\tau) 
\end{equation}
We finish this section with two results that highlight the tightness of the linear valid inequalities introduced in Theorem~\ref{th:LVI}.
\begin{proposition}[Facet-defining inequalities]
The linear inequalities~\eqref{eq:VIc} and \eqref{eq:VId} generate facets of the polytope $conv(\mathcal{P})$.
\end{proposition}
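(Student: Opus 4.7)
The plan is to show that \eqref{eq:VIc}, and by symmetry \eqref{eq:VId}, defines a facet of $\mathrm{conv}(\mathcal{P})$ by exhibiting $\dim(\mathrm{conv}(\mathcal{P}))$ affinely independent points of $\mathcal{P}$ at which the inequality holds with equality. First I would establish that $\dim(\mathrm{conv}(\mathcal{P})) = 2T$: the $T$ state-dynamics equations in \eqref{eq:STP_dynamic_state} are the only implicit equalities, because $\mathrm{conv}(\mathcal{P})$ contains the midpoint of a pure-charging and a pure-discharging feasible profile, at which every bound in \eqref{eq:Pset} is strict and every period exhibits simultaneous $p^c_t, p^d_t > 0$. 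Thus I need to produce a base tight point together with $2T - 1$ independent tight perturbations.

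For fixed $(t,\overline{\tau})$, my base point would be the trajectory that fully discharges prior to $t$ to reach $s_{t-1}=\underline{s}_0(t-1)$, charges maximally throughout the window with no discharging (achieving $\mathrm{LHS}=\sum_\tau c(t,\tau)=\mathrm{RHS}$), and completes the trajectory freely afterwards. I would then generate $2T-1$ independent tight directions across three disjoint blocks of periods. In the pre-window block $\{1,\ldots,t-1\}$ and the post-window block $\{t+\overline{\tau}+1,\ldots,T\}$, \eqref{eq:VIc} imposes no direct constraint on the variables, yielding $2(t-1)+2(T-t-\overline{\tau})$ independent small-amplitude perturbations, provided the base profile leaves slack in the state bounds. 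In the window block, I would use $2\overline{\tau}+1$ independent tight directions: the $\overline{\tau}+1$ single-discharge extreme profiles $B=\{\tau^*\}$ for $\tau^*=0,\ldots,\overline{\tau}$ (each tight for \eqref{eq:VIc} because the submodular bound underlying the proof of Theorem~\ref{th:LVI} holds as equality at every singleton), together with $\overline{\tau}$ charge-redistribution directions that shift a small amount of charging between adjacent in-window periods (which preserves the tightness equation, as only the window-total combination $\sum_\tau p^c_{t+\tau} + \overline{\rho}^c(t,\tau,\overline{\tau})\, p^d_{t+\tau}$ is constrained). Summing the contributions of the three blocks yields the required $2(t-1)+(2\overline{\tau}+1)+2(T-t-\overline{\tau})=2T-1$.

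The main obstacle is simultaneously guaranteeing feasibility and affine independence of these perturbations. Feasibility is delicate because pre-window perturbations propagate through the state-dynamics recursion and could push the in-window trajectory outside $[\underline{S},\overline{S}]$, while the charge-redistribution directions require strictly subcap charging at some in-window period in the base profile. The redundancy analysis of Remark~\ref{rmk:redundant} removes precisely those degenerate instances in which \eqref{eq:VIc} collapses to a per-period bound and no redistribution is possible; in the remaining non-redundant cases, the base profile has at least one in-window period charging strictly below $\overline{P}^c_e$, which provides the slack needed for both the redistribution and the pre-window propagation to stay within $\mathcal{P}$ while the tightness equation is preserved.
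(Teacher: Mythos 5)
Your plan shares the paper's key tightness observation (the inequality~\eqref{eq:sub_constr} is met with equality at the all-charging extreme profile, $B=\emptyset$, and at each single-discharge extreme profile, $B=\{j\}$ with $p_{t+j}^d=\delta_{t+j}$), but the route you take --- a full-dimensional facet proof in $\mathbb{R}^{3T}$ with $\dim(\mathrm{conv}(\mathcal{P}))=2T$ and $2T$ affinely independent tight points --- has a genuine gap in the pre-window block. For a non-redundant instance of~\eqref{eq:VIc} (the redundant, purely power-limited cases are exactly those removed by Remark~\ref{rmk:redundant}), the right-hand side $\sum_\tau c(t,\tau)$ is the maximum attainable value of the left-hand side \emph{computed from the most favorable initial level} $s_{t-1}=\underline{s}_{0}(t-1)$; since the SoC cap binds somewhere in the window, any point of $\mathcal{P}$ with $s_{t-1}>\underline{s}_{0}(t-1)$ satisfies~\eqref{eq:VIc} strictly. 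Hence every tight point obeys the additional equality $s_{t-1}=\underline{s}_{0}(t-1)$, and when $\underline{s}_{0}(t-1)>\underline{S}$ this pins the entire pre-window trajectory to $p_j^d=\overline{P}^d_e,\ p_j^c=0$ for $j<t$ (and even when $\underline{s}_{0}(t-1)=\underline{S}$ you still lose at least one degree of freedom to the equality $s_{t-1}=\underline{S}$). So the $2(t-1)$ ``free'' small-amplitude pre-window perturbations you invoke simply do not exist on the tight face: your base point has every pre-window variable at a bound, and any perturbation that raises $s_{t-1}$ destroys tightness. Consequently the count $2(t-1)+(2\overline{\tau}+1)+2(T-t-\overline{\tau})=2T-1$ cannot be realized for $t>1$, and the proof strategy, read literally as facetness of the full $2T$-dimensional $\mathrm{conv}(\mathcal{P})$, breaks down precisely there.

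The paper avoids this issue because its argument (consistent with how the validity proof of Theorem~\ref{th:LVI} is set up) works with the windowed superset of $\mathcal{P}$ --- periods $t,\ldots,t+\overline{\tau}$ only, with $s_{t-1}$ ranging in $[\underline{s}_{0}(t-1),\overline{s}_{0}(t-1)]$ --- and simply counts the $2|\mathcal{T}|$ distinct tight extreme profiles (maximal-charging profiles and single-discharge profiles at level $\delta_{t+j}$) in that $2|\mathcal{T}|$-dimensional setting; it never needs, and never claims, free pre-window directions. Your in-window construction (singleton-discharge points plus charge-redistribution directions, with non-redundancy supplying a below-cap charging period) is sound and in fact supplies the affine-independence detail the paper glosses over, but to salvage the argument you must either restrict the facet claim to the windowed polytope as the paper implicitly does, or restrict to $t=1$, where there is no pre-window block and your dimension count can go through.
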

\begin{proof}
Consider \eqref{eq:sub_constr} again
\begin{equation*}
    \sum_{\tau =0}^{\overline{\tau}} p_{t+\tau}^{c} \leq \overline{f}(\emptyset) + \sum_{\tau =0}^{\overline{\tau}} \overline{f}(\tau|\emptyset) \frac{p_{t+\tau}^{d}}{\delta_{t+\tau}} 
\end{equation*}
which is valid for any subset $B \subseteq \mathcal{T}$ of discharging periods. In particular, for $B = \emptyset$, we have
$$\sum_{\tau =0}^{\overline{\tau}} p_{t+\tau}^{c} \leq \overline{f}(\emptyset)$$
and for $B = \{j\}$, $j \in \mathcal{T}$,
$$    \sum_{\tau =0}^{\overline{\tau}} p_{t+\tau}^{c} \leq \overline{f}(\emptyset) +  \overline{f}(j|\emptyset) \frac{p_{t+j}^{d}}{\delta_{t+j}}$$

The former holds with equality for all the extreme power profiles of $\mathcal{P}$ that maximize the cumulative charging of the storage over periods $t, t+1, \ldots, t+\overline{\tau}$ when none of these periods is available for discharging. In contrast, the latter holds with equality for all the extreme power profiles of $\mathcal{P}$ that maximize the cumulative charging when the storage is discharged at period $j \in \mathcal{T}$ only, with $p_{t+j}^{d} = \delta_{t+j}$. Therefore, in general (excluding the redundant constraints indicated in Remark~\ref{rmk:redundant}), \eqref{eq:sub_constr} is satisfied with equality at  $2|\mathcal{T}|$ distinct vertices, which implies that it defines a facet of the polytope $conv(\mathcal{P})$.

The same line of reasoning applies to the linear inequalities~\eqref{eq:VId}.
\end{proof}
\begin{corollary}
    The linear inequalities~\eqref{eq:VIc} and \eqref{eq:VId} generalize and dominate those introduced in~\cite{pozo2023convex}.
\end{corollary}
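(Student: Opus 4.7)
The plan is to split the corollary in two and argue generalization and dominance separately. First I would specialize \eqref{eq:VIc} and \eqref{eq:VId} to $\overline{\tau}=0$ to recover Pozo's single-period cuts verbatim, and then invoke the facet-defining property of the preceding proposition to argue that the $\overline{\tau}\ge 1$ members are strictly tighter.

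For the generalization step, setting $\overline{\tau}=\tau=0$ makes the inner sum $\sum_{j=0}^{-1}\overline{c}(j)$ empty, so $\rho^c(t,0,0)=\max\{-\overline{P}_e^d(t)/(\eta_d\eta_c),\,c(t,0)\}=c(t,0)\ge 0$, and hence $\overline{\rho}^c(t,0,0)=c(t,0)/\overline{P}_e^d(t)=c(t,0)/d(t,0)$, using $\overline{P}_e^d(t)=d(t,0)$ as defined in Theorem~\ref{th:LVI}. Inequality \eqref{eq:VIc} then collapses to $p_t^c/c(t,0)+p_t^d/d(t,0)\le 1$, which is exactly the SoC-aware single-period facet-defining inequality of~\cite{pozo2023convex}; the symmetric calculation on \eqref{eq:VId} recovers its discharging counterpart. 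This exhibits Pozo's family as the $\overline{\tau}=0$ sub-family of the new one, settling the generalization claim.

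For dominance, I would appeal to the Facet-defining Proposition just above, which guarantees that every non-redundant member of \eqref{eq:VIc}-\eqref{eq:VId} defines a facet of $\mathrm{conv}(\mathcal{P})$. Since distinct facets of a full-dimensional polytope are irredundant, the $\overline{\tau}\ge 1$ cuts cannot be written as nonnegative combinations of the $\overline{\tau}=0$ (Pozo) cuts together with the box and storage-dynamic constraints of $\mathcal{P}$; otherwise they would fail to be facet-defining. Consequently, adding the multi-period cuts strictly tightens the relaxation relative to Pozo's single-period cuts alone, which is precisely the sense in which the new family dominates. The main delicate point I anticipate is the collapse behavior flagged in Remark~\ref{rmk:redundant}: some $\overline{\tau}\ge 1$ cases reduce to a $\overline{\tau}=0$ cut when the storage is far from its bounds, so strict dominance must be asserted only on the non-redundant cuts. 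To make the strict tightening tangible, I would exhibit one explicit fractional profile in which the SoC bound becomes binding within the window (i.e., $c(t,\tau)<\overline{P}_e^c$ for some $\tau\ge 1$) and check by direct substitution that it satisfies every single-period Pozo cut while violating \eqref{eq:VIc} at the corresponding $\overline{\tau}$.
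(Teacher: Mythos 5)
Your $\overline{\tau}=0$ computation is fine as algebra ($\rho^c(t,0,0)=c(t,0)$, hence $p_t^c/c(t,0)+p_t^d/d(t,0)\le 1$), but the argument has two genuine gaps. The main one is the dominance step: ``dominates'' here means that the new inequalities \emph{imply} Pozo's (so that Pozo's cuts become redundant once \eqref{eq:VIc}--\eqref{eq:VId} are added). Your facet/irredundancy argument establishes only the reverse, weaker fact: that the multi-period cuts are not implied by Pozo's cuts, i.e., they strictly tighten the relaxation. That shows the new family is not \emph{dominated} by Pozo's, not that it dominates it; two families can each cut off points the other misses, in which case neither dominates the other. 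The paper proves the needed direction by a direct term-by-term comparison: it first writes the cuts that Pozo's approach produces in power space (chaining the single-period, SoC-aware facets through the dynamics), namely $\sum_{\tau=0}^{\overline{\tau}} p_{\tau+1}^{c} - \sum_{\tau=0}^{\overline{\tau}-1} p_{\tau+1}^{d}/(\eta_c\eta_d) \leq (\overline{S}-s_0)/(\Delta\eta_c)$ and its discharging counterpart, and then checks that the coefficients of \eqref{eq:VIc} are componentwise at least as large and the right-hand side at least as small: $\overline{\rho}^c(1,\tau,\overline{\tau}) \geq -1/(\eta_c\eta_d)$, $\rho^c(1,\overline{\tau},\overline{\tau})=c(1,\overline{\tau})\geq 0$, and $\sum_{\tau=0}^{\overline{\tau}} c(1,\tau) \leq (\overline{S}-s_0)/(\Delta\eta_c)$; with $\boldsymbol{p}^c,\boldsymbol{p}^d\geq 0$ this immediately gives the implication. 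Your proposed ``explicit fractional profile satisfying Pozo's cuts but violating \eqref{eq:VIc}'' again certifies only strict improvement, not dominance.

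The second gap is the identification of Pozo's family with the $\overline{\tau}=0$ sub-family. That is verbatim only at $t=1$, where $s_0$ is data. For $t\geq 2$, Pozo's single-period facets involve the SoC \emph{variable} $s_{t-1}$, whereas your $\overline{\tau}=0$ cut uses the constant reachability bounds $\underline{s}_0(t-1),\overline{s}_0(t-1)$; since $\underline{s}_0(t-1)\le s_{t-1}\le\overline{s}_0(t-1)$, the SoC-dependent cut can be pointwise strictly stronger when the state is near its bounds, so Pozo's family is not literally a sub-family of \eqref{eq:VIc}--\eqref{eq:VId}, and the comparison must be made (as the paper does) against the projected power-space form above. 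The paper's generalization claim is argued differently as well: the new inequalities are anchored at every starting period $t=1,\ldots,T$ and every window length, whereas the Pozo-type cuts are anchored at $s_0$.
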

\begin{proof}
The claim follows directly from the fact that inequalities~\eqref{eq:VIc} and \eqref{eq:VId} define facets of the convex hull of $\mathcal{P}$ in \eqref{eq:Pset}, which accounts for an arbitrary number of periods. In contrast, the inequalities presented in~\cite{pozo2023convex} are facet-defining  for the convex hull of a simplified version of $\mathcal{P}$ that considers a single period only.

Viewed from a different angle, the inequalities that the approach in~\cite{pozo2023convex} produces are of the form
$$\sum_{\tau =0}^{\overline{\tau}} p_{\tau+1}^{c} - \sum_{\tau =0}^{\overline{\tau}-1}\frac{p_{\tau+1}^{d}}{\eta_c\eta_d} \leq \frac{\overline{S}-s_0}{\Delta \eta_c}$$
$$\sum_{\tau =0}^{\overline{\tau}} p_{\tau+1}^{d} - \sum_{\tau =0}^{\overline{\tau}-1}\eta_c\eta_dp_{\tau+1}^{c} \leq \frac{s_0 - \underline{S}}{\Delta} \eta_d$$
which are clearly dominated by~\eqref{eq:VIc} and \eqref{eq:VId}, respectively, because $\overline{\rho}^c(1,\tau,\overline{\tau}) \geq -1/(\eta_c\eta_d)$,  $\overline{\rho}^d(1,\tau,\overline{\tau}) \geq -\eta_c\eta_d$; $\rho^c(1,\overline{\tau},\overline{\tau}) = c(1,\overline{\tau}) \geq 0$, $\rho^d(1,\overline{\tau},\overline{\tau}) = d(1,\overline{\tau}) \geq 0$; and $\sum_{\tau=0}^{\overline{\tau}} c(1,\tau) \leq (\overline{S}-s_0)/(\Delta \eta_c)$, $\sum_{\tau=0}^{\overline{\tau}} d(1,\tau) \leq (s_0-\underline{S}) \eta_d/\Delta$.


Finally, the generalization claimed in the statement of the corollary follows from the fact that \eqref{eq:VIc} and \eqref{eq:VId} yield valid inequalities starting from any $t = 1, \ldots, T$.
\end{proof}

\section{Second-order-cone valid inequalities for the setpoint tracking problem}\label{sec:SOCVI}
In this section, we focus on the \emph{Setpoint Tracking Problem} (STP), for which we introduce tailored conic valid inequalities that tighten its nonconvex formulation and drastically reduce the occurrence of infeasible solutions when solving its convex relaxation.   

Essentially, the setpoint tracking problem seeks to determine how the ESS must be charged or discharged in order to follow a given time-varying power signal $p_t^s$, $t = 1, \ldots, T$. The setpoint tracking problem can thus be formulated as follows \cite{pozo2023convex, taha2024lossy}:
\begin{equation}\label{eq:STP}
 \min_{(\boldsymbol{p}^d, \boldsymbol{p}^c, \boldsymbol{s}) \in \mathcal{P}}  \quad \sum_{t=1}^T (p_t^d-p_t^c-p_t^s)^2
\end{equation}
Problem~\eqref{eq:STP} is a disjunctive program, which is NP-hard and can be formulated as a mixed-integer quadratically convex problem. As demonstrated in~\cite{pozo2023convex}, the quadratically convex relaxation of problem~\eqref{eq:STP}, obtained by removing the disjunctive constraints~\eqref{eq:STP_comp}, frequently results in infeasible solutions, where simultaneous charging and discharging occurs.

To obtain a tighter formulation, we first rewrite problem~\eqref{eq:STP} in a more convenient form, by introducing the epigraphic variables $\boldsymbol{z} \in \mathbb{R}^{T}$ and noticing that $(p_t^d-p_t^c-p_t^s)^2 = (p_t^d)^2 + (p_t^c)^2 -2 p_t^d p_t^s + 2  p_t^c p_t^s + (p_t^s)^2$, since $p_t^d p_t^{c} = 0$. Thus, problem~\eqref{eq:STP} is equivalent to
\begin{equation}\label{eq:STP_epi}
\min_{(\boldsymbol{p}^d, \boldsymbol{p}^c, \boldsymbol{s}, \boldsymbol{z}) \in \mathcal{S}}  \quad \sum_{t=1}^T z_t
\end{equation}
where
\begin{multline}
\mathcal{S}:= \{\boldsymbol{p}^d, \boldsymbol{p}^c, \boldsymbol{s}, \boldsymbol{z} \in \mathbb{R}^{T}: \boldsymbol{p}^d, \boldsymbol{p}^c, \boldsymbol{s} \in \mathcal{P}, \\
z_t \geq  (p_t^d)^2 + (p_t^c)^2 -2 p_t^d p_t^s + 2  p_t^c p_t^s + (p_t^s)^2, \ \forall t \leq T\}   
\end{multline}

Since the objective function of \eqref{eq:STP_epi} is linear, \cite[\S13]{rockafellar1997convex} tells us that the optimal value of \eqref{eq:STP_epi} remains unaltered if we replace $\mathcal{S}$ with its convex hull, i.e., $\text{conv}(\mathcal{S})$. Unfortunately, the exact and full description of $\text{conv}(\mathcal{S})$ requires many exponentially constraints, which makes problem~\eqref{eq:STP_epi} NP-hard.

The following result is at the core of the proposed tight convex relaxation of the STP problem.
\begin{theorem}\label{Thm:convex_hull}
Consider the set of constraints
\begin{align}
    \mathcal{E} = \Big\{&(z_t, p_t^d, p_t^c)  \in \mathbb{R}^{3}: \notag\\ 
    z_t &\geq  (p_t^d)^2 + (p_t^c)^2 -2 p_t^d p_t^s + 2  p_t^c p_t^s + (p_t^s)^2\label{eq:obj_epi} \\
    p_t^d p_t^c &= 0\Big\}
\end{align}
The convex hull of $\mathcal{E}$ is given by
\begin{align}
&\text{conv}(\mathcal{E}) = \Big\{(z_t, p_t^d, p_t^c)  \in \mathbb{R}^{3}: p_t^d \geq 0, p_t^c \geq 0 \notag\\ 
&\norm{\begin{pmatrix}
    \frac{\bm{b}^{\top}}{2}\\
     \bm{A}
\end{pmatrix} \begin{pmatrix}
    p_t^d\\
    p_t^c\\
    z_t
\end{pmatrix} + \begin{pmatrix}
    \frac{1+c}{2}\\
    \bm{0}
    \end{pmatrix}} \leq \frac{1}{2} \left(1- \bm{b}^{\top}\begin{pmatrix}
    p_t^d\\
    p_t^c\\
    z_t
\end{pmatrix} -c\right) \label{eq:SOC_cylinder}\Big\}
\end{align}
where 
$$\bm{A} = \begin{pmatrix}
    1 & 1 & 0\\
    0 & 0 & 0 \\
    0 & 0 & 0
\end{pmatrix}, \ \bm{b} = \begin{pmatrix}
    -2 p_t^s\\
    2 p_t^s\\
    -1
\end{pmatrix}, \ c= (p_t^s)^2$$
\end{theorem}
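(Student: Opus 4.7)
The plan is to identify $\text{conv}(\mathcal{E})$ by recognizing $\mathcal{E}$ as a two-branch disjunction, applying the Balas perspective lifting for the convex hull of a union of convex sets, projecting out the lifting variable, and finally rearranging the resulting quadratic inequality into the second-order cone form of~\eqref{eq:SOC_cylinder}.

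First I would exploit the complementarity $p_t^d p_t^c = 0$ to write $\mathcal{E} = \mathcal{E}_d \cup \mathcal{E}_c$ with
\begin{equation*}
\mathcal{E}_d = \{(z_t, p_t^d, 0) : p_t^d \geq 0,\, z_t \geq (p_t^d - p_t^s)^2\}, \enskip \mathcal{E}_c = \{(z_t, 0, p_t^c) : p_t^c \geq 0,\, z_t \geq (p_t^c + p_t^s)^2\},
\end{equation*}
since on each branch the quadratic in~\eqref{eq:obj_epi} collapses to a perfect square; non-negativity of $p_t^d, p_t^c$ is inherited from the definition of $\mathcal{P}$ and accounts for the sign conditions appearing in~\eqref{eq:SOC_cylinder}. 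Both branches are closed convex epigraphs.

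Next I would invoke the standard disjunctive programming formula stating that the closed convex hull of the union of two closed convex sets is the Minkowski sum of their perspective lifts, controlled by a convex combination weight $\lambda \in [0,1]$. Using the perspective $\lambda f(y/\lambda)$ of the convex quadratics defining $\mathcal{E}_d$ and $\mathcal{E}_c$, membership in $\text{conv}(\mathcal{E})$ becomes: there exists $\lambda \in [0,1]$ such that $p_t^d, p_t^c \geq 0$ and
\begin{equation*}
z_t \geq \frac{(p_t^d - \lambda p_t^s)^2}{\lambda} + \frac{(p_t^c + (1-\lambda) p_t^s)^2}{1-\lambda},
\end{equation*}
with the usual closures at $\lambda \in \{0,1\}$. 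To project out $\lambda$, I expand both perspectives and observe that the $p_t^s$-linear terms combine to the $\lambda$-independent quantity $(p_t^s)^2 - 2 p_t^d p_t^s + 2 p_t^c p_t^s$, reducing the task to the scalar minimization $\min_{\lambda \in (0,1)} (p_t^d)^2/\lambda + (p_t^c)^2/(1-\lambda)$; a first-order condition gives the minimizer $\lambda^{*} = p_t^d/(p_t^d + p_t^c)$ and the optimal value $(p_t^d + p_t^c)^2$, which smoothly handles the endpoints $p_t^d = 0$ or $p_t^c = 0$ in the limit.

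Combining the contributions, the projected description of the convex hull is $p_t^d, p_t^c \geq 0$ together with $z_t \geq (p_t^d + p_t^c)^2 + (p_t^s)^2 - 2 p_t^d p_t^s + 2 p_t^c p_t^s$. The closing step is a routine algebraic rewriting into the SOC form: squaring the rotated-cone inequality~\eqref{eq:SOC_cylinder} and using the identity $((\alpha+1)/2)^2 - ((1-\alpha)/2)^2 = \alpha$ with $\alpha = \bm{b}^{\top}(p_t^d, p_t^c, z_t)^{\top} + c$ collapses~\eqref{eq:SOC_cylinder} to $\|\bm{A}(p_t^d, p_t^c, z_t)^{\top}\|^2 = (p_t^d + p_t^c)^2 \leq -\alpha = z_t - (p_t^s)^2 + 2 p_t^d p_t^s - 2 p_t^c p_t^s$, which matches the projected constraint exactly. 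The main obstacles I anticipate are twofold: (i) rigorously handling the perspective functions at $\lambda \in \{0,1\}$ so that the lifted formulation coincides with $\text{conv}(\mathcal{E})$ rather than merely its closure; and (ii) keeping signs straight in the final algebraic identification, since the specific packaging of $(1+c)/2$ and the linear data inside a single rotated-cone inequality is tailored precisely to absorb both the cross terms $\pm 2 p_t^s p_t^d, \pm 2 p_t^s p_t^c$ and the quadratic term $(p_t^d + p_t^c)^2$ in one shot.
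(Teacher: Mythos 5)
Your proposal is correct, and it reaches the stated description by a genuinely different route than the paper. The paper first constructs the parabolic cylinder $z = (p^d)^2+(p^c)^2+2p^dp^c-2p^dp^s+2p^cp^s+(p^s)^2$ through a pencil-of-quadrics argument (following Theorem~4.1 of Belotti et al.), combining the quadric $p^dp^c=0$ with the paraboloid so that the member with $\lambda=2$ intersects the planes $p^c=0$ and $p^d=0$ exactly along the branch parabolas, and then proves the reverse inclusion by exhibiting, for each point of the cylinder's epigraph with $p^d,p^c\geq 0$, the explicit convex combination with $p^d_0=p^c_0=p^d+p^c$ and $\lambda=p^d/(p^d+p^c)$. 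You instead start from the disjunction $\mathcal{E}=\mathcal{E}_d\cup\mathcal{E}_c$, write membership in the hull via the perspective (Balas) lift, and project out the mixing weight by solving $\min_{\lambda\in(0,1)}(p^d)^2/\lambda+(p^c)^2/(1-\lambda)=(p^d+p^c)^2$; the cylinder then emerges from the computation rather than being guessed and verified, and your final rotated-cone algebra matching $(p^d+p^c)^2\leq -\bm{b}^\top x-c$ with \eqref{eq:SOC_cylinder} is exactly right. The closure worry you flag in step (i) is in fact benign: for two convex sets the identity $\mathrm{conv}(A\cup B)=\bigcup_{\lambda\in[0,1]}\bigl(\lambda A+(1-\lambda)B\bigr)$ is exact, and your minimizer $\lambda^{*}=p^d/(p^d+p^c)$ (the same multiplier the paper uses) shows the boundary points are attained by genuine convex combinations, so the projected set is the hull itself and not merely its closure. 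One cosmetic point you already handle as the paper does: the nonnegativity $p^d,p^c\geq 0$ is not written in the theorem's definition of $\mathcal{E}$ but is inherited from $\mathcal{P}$, and both proofs rely on it. Net effect: your argument is somewhat more self-contained (no appeal to the quadric-pencil machinery) and certifies both inclusions in one pass, while the paper's construction explains geometrically where the cross term $2p^dp^c$ comes from and generalizes to other disjunctive quadrics.
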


\begin{proof}
To simplify notation and enhance readability, we omit the subscript $t$ in the following derivations.

We can alternatively express the disjunctive set $\mathcal{E}$ as
$\mathcal{E}= \{(z, p^d, p^c) =  (z_{0}, p_{0}^{d}, p_{0}^c) + \alpha (1,0,0), (z_{0}, p_{0}^{d}, p_{0}^c) \in \mathcal{Z}^c \bigcup \mathcal{Z}^d, \alpha \in \mathbb{R}^{+}\}$, where
$$\mathcal{Z}^d = \left\{\left((p^d)^2 -2 p^d p^s + (p^s)^2, p^d, 0\right) \in \mathbb{R}^3: p^d \geq 0\right\}$$
$$\mathcal{Z}^c = \left\{\left((p^c)^2 +2 p^c p_t^s + (p^s)^2, 0, p^c\right) \in \mathbb{R}^3: p^c \geq 0\right\}$$

The convex hull of $\mathcal{E}$ is thus directly related to the convex hull of the union of the epigraphs of the curves $\mathcal{Z}^d$ and $\mathcal{Z}^c$, which are both portions of the epigraphs of the parabolas given by the intersections of the paraboloid~\eqref{eq:obj_epi} with the planes $p^c = 0$ and $p^d = 0$, respectively, for $p^c, p^d \geq 0$. Therefore, to obtain the exact description of the convex hull of $\mathcal{E}$, we will determine (the piece of) the cylinder whose intersections with those planes are also the curves $\mathcal{Z}^d$ and $\mathcal{Z}^c$.

For this purpose, we will follow the approach in Theorem~4.1 of \cite{belotti2013families} and construct a family of quadrics parametrized by $\lambda \in \mathbb{R}$ whose intersections with the planes $p^c = 0$ and $p^d = 0$ contain also the curves $\mathcal{Z}^d$ and $\mathcal{Z}^c$, respectively, as the paraboloid~\eqref{eq:obj_epi} does. Since
$$p^d p^{c} = 0 =  (p^d\ p^c\  z)\begin{pmatrix}
    0 & \frac{1}{2} & 0\\
    \frac{1}{2} & 0 & 0 \\
    0 & 0 & 0
\end{pmatrix} \begin{pmatrix}
    p^d\\
    p^c\\
    z
\end{pmatrix}= 0$$ and
\begin{align*}
 & 0 = (p^d)^2 + (p^c)^2 -2 p^d p^s + 2  p^c p^s + (p^s)^2 -z \\ & =  (p^d\ p^c\  z) \begin{pmatrix}
    1 & 0 & 0\\
    0 & 1 & 0 \\
    0 & 0 & 0
\end{pmatrix} \begin{pmatrix}
    p^d\\
    p^c\\
    z
\end{pmatrix}  + \bm{b}^{\top} \begin{pmatrix}
    p^d\\
    p^c\\
    z
\end{pmatrix}+ c 
\end{align*}
the target family is defined by the pencil of quadrics $\{\mathcal{Q}(\lambda): \lambda \in \mathbb{R}\}$ given by
$$
   (p^d\ p^c\  z) \begin{pmatrix}
    1 & \frac{\lambda}{2} & 0\\
     \frac{\lambda}{2} & 1 & 0 \\
    0 & 0 & 0
\end{pmatrix} \begin{pmatrix}
    p^d\\
    p^c\\
    z
\end{pmatrix}  + \bm{b}^{\top} \begin{pmatrix}
    p^d\\
    p^c\\
    z
\end{pmatrix}+ c 
$$

Now we verify that the desired (parabolic) cylinder $\mathcal{C}$ is the member of the family that corresponds to $\lambda = 2$ and satisfies the equation
$$z = (p^d)^2 + (p^c)^2 + 2 p^d p^c  -2 p^d p^s + 2  p^c p^s + (p^s)^2$$
Here we observe the term $2 p^d p^c$, which will effectively penalize the simultaneous charge and discharge of the ESS. Moreover, the epigraph of this cylinder is second-order-cone representable as expressed in equation~\eqref{eq:SOC_cylinder}.

Since $\mathcal{E} \subset \mathcal{C}$, it remains only to show that the epigraph of $\mathcal{C}$ restricted over the domain $p^d, p^c \geq 0$ is contained in $conv(\mathcal{E})$. That is, for any point $(z, p^d, p^c)$ in the epigraph of $\mathcal{C}$, with $p^d, p^c \geq 0$, there must exist a pair of points $((p^d_0)^2-2p^d_0p_s + (p_s)^2, p^d_0, 0) \in \mathcal{Z}^d$, $((p^c_0)^2+2p^c_0p_s + (p_s)^2, 0, p^c_0) \in \mathcal{Z}^c$ and a scalar $\lambda \in [0,1]$ such that $p^d = \lambda p^d_0$, $p^c = (1-\lambda) p^c_0$, and $(p^d)^2 + (p^c)^2 + 2 p^d p^c  -2 p^d p^s + 2  p^c p^s + (p^s)^2 \geq \lambda ((p^d_0)^2-2p^d_0p^s + (p_s)^2) + (1-\lambda) ((p^c_0)^2+2p^c_0p^s + (p_s)^2)$. This leads to
$$\lambda(1-\lambda)[(p^d_0)^2+(p^c_0)^2-2p^d_0p^c]\leq 0$$
which is satisfied for either $\lambda = 0$, $\lambda = 1$ or $p^d_0 = p^c_0$. Therefore, the targeted pair of points is given by $p^d_0 = p^c_0 = p^d + p^c \geq 0$ and $\lambda = p^d/(p^d+p^c) \in [0,1]$.
\end{proof}

An illustration of the proposed second-order cone (SOC) inequality for $p^s = 1$, represented by a parabolic cylinder, is shown in Figure~\ref{fig:cylinder}.
\begin{figure}
    \centering
    \includegraphics[width=\linewidth]{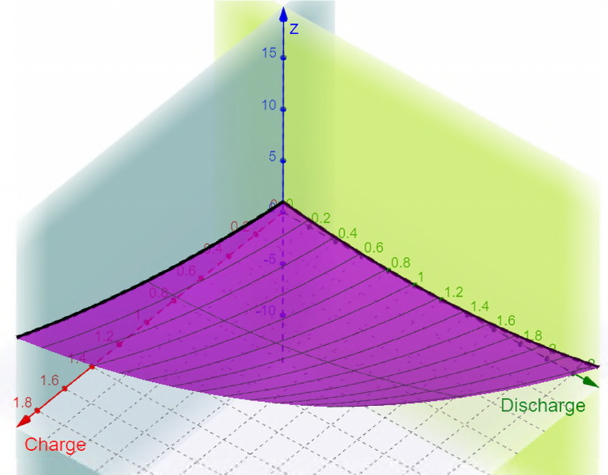}
    \caption{Illustration of the parabolic cylinder $\mathcal{C}$ for $p^s = 1$.}
    \label{fig:cylinder}
\end{figure}

\begin{remark}
    Note that there exists a (parabolic) cylinder whose epigraph \emph{contains, but is not contained} in $\mathcal{S}$. This cylinder is given by
    \begin{align*}
     z &= \sum_{t=1}^{T}{(p_t^d)^2 + (p_t^c)^2 + 2 p_t^d p_t^c  -2 p_t^d p_t^s + 2  p_t^c p_t^s + (p_t^s)^2} \\
     & = \sum_{t=1}^T z_t   
    \end{align*}
    In effect, the intersection of this cylinder with the quadric $\sum_{t=1}^{T} p_t^c p_t^d = 0$ includes the extreme points of $\mathcal{S}$.
\end{remark}

\section{Case study}\label{sec:case_study}
In this section, we present results from a series of numerical experiments to evaluate the tightness of the proposed linear and SOC cuts. Specifically, we conduct simulations on two distinct problems: (i) the problem of scheduling a battery to maximize market profit via energy arbitrage, and (ii) the previously introduced setpoint tracking problem. These will serve us to assess the strength of the linear valid inequalities~\eqref{eq:VIc} and~\eqref{eq:VId}, and the second-order-cone constraints~\eqref{eq:SOC_cylinder}, respectively.
Both problems are known to frequently produce solutions exhibiting simultaneous charging and discharging. In the scheduling problem, these instances arise in the presence of negative energy prices\cite[Ch. 3]{Prat2025}, which are increasingly common in many electricity markets worldwide due to the growing penetration of renewable energy sources.

Both problems are defined within a 24-hour horizon ($T = 24$), with hourly periods ($\Delta = 1$h), and use the database with 100 unique battery parameters built in~\cite{pozo2023convex}. All the data and codes necessary for reproducing the results of this article available from the GitHub repository \cite{mygithub}.

\subsection{Scheduling problem}
The problem of scheduling the charging and discharging of a battery at day $D-1$ to maximize revenues in view of the electricity prices at day $D$ is given by 
\begin{equation}\label{eq:Scheduling}
 \min_{(\boldsymbol{p}^d, \boldsymbol{p}^c, \boldsymbol{s}) \in \mathcal{P}}  \quad \sum_{t=1}^T \lambda_{t}(p_t^d-p_t^c)
\end{equation}
where $\lambda_t$ represents the electricity price at hour $t$,  assumed to be known. For simplicity, we do not impose a terminal condition on the storage energy level at the end of the time horizon. Despite its simplicity, the stylized formulation in~\eqref{eq:Scheduling} is sufficient to support the aim of our simulations and the focus of this research.

We selected 10 price vectors from the Danish day-ahead electricity market (zone DK1), each representing a day with negative electricity prices. These days are: July 2, 2023; January 1, 2024; and June 2, 8, 9, 15, 16, 28, as well as July 4 and 7, 2024. The data is publicly available from the ENTSO-e Transparency Platform \cite{entsoe}. These 10 price vectors, together with the 100 battery parameter configurations used in~\cite{pozo2023convex}, result in 1000 instances of problem~\eqref{eq:Scheduling}.

For each instance, we have solved problem~\eqref{eq:Scheduling} using the mixed-integer set~\eqref{eq:Pset_MIP} and the following linear relaxations:
\begin{description}
    \item[HCH-LP,] proposed in~\cite{pozo2023convex}.
     \item[TLP,] in which the original feasible set $\mathcal{P}$ is replaced by the valid inequalities \eqref{eq:VIc} and \eqref{eq:VId}, with $\boldsymbol{p}^d, \boldsymbol{p}^c \in \mathbb{R}^{T}_{\geq 0}$.
     \item[TLP+u,] which is built from TLP by adding inequalities \eqref{eq:LVIc_u} and \eqref{eq:LVId_u} along with \eqref{eq:STP_max_dis_MIP}, \eqref{eq:STP_max_ch_MIP}, and $\boldsymbol{u} \in [0,1]^T$.
\end{description}

All optimization problems were solved using the open-source solver HiGHS \cite{highs}, accessed via the \emph{amplpy} API for Python~\cite{ampl}. Computations were performed on a Linux-based server equipped with a 2.6 GHz CPU (single-threaded) and 8 GB of RAM. The results are summarized in Table~\ref{tab:results_scheduling}, where each column provides the following information (from left to right):

\begin{itemize}
\item The linear relaxation of the battery's feasible operation set $\mathcal{P}$ used to solve the problem (Formulation).
\item The percentage of hours (out of 24\,000 simulated hours) during which the linear relaxation results in simultaneous charging and discharging of the battery ($\#[p_{t}^c p_{t}^d > 10^{-4}]$).
\item The average magnitude of the violation of the complementarity constraint~\eqref{eq:STP_comp}, computed across the 1000 problem instances.
\item The percentage reduction in solution time relative to the full MILP formulation achieved by the corresponding linear relaxation ($\Delta {\rm Time}$).
\end{itemize}
\begin{table}
\renewcommand{\arraystretch}{1.5}
\centering
\begin{tabular}{lccccc}
\hline
Formulation & \#$[p_{t}^c p_{t}^d > 10^{-4}]$ (\%) & $\boldsymbol{p^c} \cdot \boldsymbol{p^d}$ (${\rm kW}^2$) & $\Delta {\rm Time}$ (\%) \\
\hline
HCH-LP & 2.71 & 24.98 & 52.72\\
TLP & 1.73 & 11.67 & 28.81  \\
TLP+u & 0.75 & 5.76& 21.75   \\
\hline
\end{tabular}
\vspace{2mm}
\caption{Simulation results: Battery scheduling problem}
\label{tab:results_scheduling}
\end{table}
The results presented in Table~\ref{tab:results_scheduling} clearly demonstrate the effectiveness of the proposed valid inequalities. Specifically, the TLP and TLP+u formulations reduce the incidence of simultaneous charging and discharging events by 36\% and 72\%, respectively, compared to the state-of-the-art linear relaxation HCH-LP. These improvements are even more pronounced (rising to 53\% and 77\%, respectively) when measured by the average magnitude of the violation of the complementarity constraint~\eqref{eq:STP_comp}. However, these gains come at the expense of a reduced improvement in solution time relative to the full MILP formulation.

It is important to note that the families of valid inequalities~\eqref{eq:VIc}--\eqref{eq:VId} and~\eqref{eq:LVIc_u}--\eqref{eq:LVId_u} may introduce a substantial number of redundant constraints, depending on the battery's time constants (i.e., the time required for full charging and discharging). To mitigate this trade-off, selectively omitting redundant constraints in the linear relaxation may help preserve greater computational speed-ups while still improving the relaxation tightness.

\subsection{Set-point tracking problem}
We now evaluate the effectiveness of the SOC valid inequalities \eqref{eq:SOC_cylinder}, specifically designed to strengthen the relaxation of the set-point tracking problem. To this end, we adopt a setup similar to that in~\cite{pozo2023convex}: a household is equipped with a 35-kW photovoltaic (PV) system and a controller that schedules the battery’s charging and discharging to follow a reference power signal. This signal corresponds to the household’s net demand, that is, the power demand minus the PV generation.

Both the household demand and the per-unit PV power generation data are available from~\cite{PozoData}. The dataset in~\cite{PozoData} includes 725 daily PV power profiles with hourly resolution, of which we use the first 200. These profiles are combined with the 100 distinct battery parameter configurations that are also provided in~\cite{PozoData}, yielding 20\,000 unique instances of the set-point tracking problem (equivalent to 480\,000 hours of simulated battery operation).

Each instance is solved using three approaches:

\begin{itemize}
    \item The mixed-integer formulation \eqref{eq:Pset_MIP}, which results in a mixed-integer quadratic program (MIQP),
    \item The continuous quadratic relaxation derived from the HCH-LP battery model proposed in~\cite{pozo2023convex}, and
    \item The second-order cone (SOC) relaxation based on our TLP battery model\footnote{The use of TLP+u yields no additional improvements in this case.}, enhanced with the proposed SOC valid inequalities \eqref{eq:SOC_cylinder}, referred to as TLP+SOC.
\end{itemize}

The exact mixed-integer quadratic formulation and the continuous quadratic relaxation (HCH-LP) were solved using GUROBI 11.0.2 \cite{gurobi}, while the second-order cone relaxation was solved using MOSEK 10.2.0 \cite{MOSEK}. In both cases, the solvers were accessed via the \emph{amplpy} Python API \cite{ampl}. All computations were performed on a Linux-based server with 2.6 GHz CPUs, using a single thread and 8 GB of RAM. All other solver parameters were left at their default settings.

The outcomes of these experiments are summarized in Table~\ref{tab:results_STP}, which presents results in the same format as the earlier Table~\ref{tab:results_scheduling}.
\begin{table}
\renewcommand{\arraystretch}{1.5}
\centering
\begin{tabular}{lccccc}
\hline
Formulation & \#$[p_{t}^c p_{t}^d > 10^{-4}]$ (\%) & $\boldsymbol{p^c} \cdot \boldsymbol{p^d}$ (${\rm kW}^2$) & $\Delta {\rm Time}$ (\%) \\
\hline
HCH-LP & 15.76& 104.44 & 40.59\\
TLP+SOC & 0.08 & 0.04 & 27.08  \\
\hline
\end{tabular}
\vspace{2mm}
\caption{Simulation results: Set-point tracking problem}
\label{tab:results_STP}
\end{table}
Although the proposed second-order cone (SOC) relaxation requires more computational time on average than the quadratic relaxation, and thus offers less speedup compared to the exact MIQP formulation, it significantly improves solution quality. Specifically, it reduces the number of hours with simultaneous charging and discharging, as well as the magnitude of violations of the complementarity constraint~\eqref{eq:STP_comp}, by 99.50\% and 99.96\%, respectively. In other words, it nearly eliminates simultaneous charging and discharging altogether.

\section{Conclusions} \label{sec:conclusions}
In this work, we have first uncovered a novel structural property of the feasible operating set of an energy storage system with losses: its extreme points exhibit a submodular structure. This insight has enabled us to design a systematic procedure for generating valid linear inequalities that define facets of the convex hull of this set. These inequalities not only generalize but also dominate existing formulations in the technical literature.

Furthermore, by employing geometric arguments, we have derived conic valid inequalities tailored to the setpoint tracking problem, a well-known context where the standard relaxation frequently leads to infeasible solutions. 

We have demonstrated through numerical experiments that both families of valid inequalities are highly effective in practice. Specifically, they dramatically reduce the occurrence of simultaneous charging and discharging events. In the context of profit-maximizing storage operation via price arbitrage in electricity markets, infeasible solutions are reduced by approximately 70\%. In the setpoint tracking problem, the reduction is even more remarkable, reaching 99\%.

Future research will focus on the development of a computationally efficient separation algorithm for the propose linear valid inequalities. Such an algorithm would enable their iterative and selective inclusion during the resolution process, paving the way for a faster solution to optimization problems with storage.

\bibliographystyle{IEEEtran}
\bibliography{references}


  

\appendices
\section{Proof of Proposition~\ref{prop:submodularity}}\label{app:submodular}
    Computing $f(\Omega), \Omega \subseteq \mathcal{T}$, is easy, because an optimal solution to the linear program~\eqref{eq:LVIopt} simply consists in sequentially charging and discharging the storage as much as possible in those periods $\tau \in \Omega$ and $\tau \notin \Omega$, respectively. Actually, we can express function $f$ as
$$f(\Omega) = \sum_{\tau \in \Omega}{\min\left\{\frac{\overline{S} - s_{t+\tau-1}(\Omega)}{\Delta \eta_c}, \, \overline{P}^c_e\right\}}$$
where the energy level of the storage at time $t+\tau$ can also be seen as a set function of $\Omega \subseteq \mathcal{T}$. More precisely, it solely depends on those elements in $\Omega$ contained in the set $\{0, 1, \ldots, \tau\}$. Hence, if we define
$\Omega_{\le \tau} = \{j \in \Omega: j \le \tau\}$, we can instead write
$$f(\Omega) = \sum_{\tau \in \Omega}{\min\left\{\frac{\overline{S} - s_{t+\tau-1}(\Omega_{\leq \tau-1})}{\Delta \eta_c}, \, \overline{P}^c_e\right\}}$$
where each of the $|\Omega|$ terms in the summation represents the gain of element $\tau$ in context $\Omega_{\leq \tau-1}$. More specifically,
$$f(\tau|\Omega_{\leq \tau-1}) = \min\left\{\frac{\overline{S} - s_{t+\tau-1}(\Omega_{\leq \tau-1})}{\Delta \eta_c}, \, \overline{P}^c_e\right\}$$
Essentially, this means that the amount of power that can be charged into the storage at time $\tau$ depends solely on its past charge or discharge history, and not on any future events.

Now, for $f$ to be submodular, the gain of any arbitrary element $\tau$ must be monotone non-increasing with respect to $\Omega$.  In the case of the gain $f(\tau|\Omega_{\leq \tau-1})$, since $\Delta \eta_c > 0$, it suffices to show that the state of charge of the storage $s_{t+\tau-1}(\Omega_{\leq \tau-1})$ is monotone non-decreasing with $\Omega$, which can be proved by induction. Indeed, suppose that $s_{t+\tau-1}(\Omega) = s_{t+\tau-1}(\Omega_{\leq \tau-1})$ is effectively monotone non-decreasing. In addition, denote the position of an element $j$ in the set $\Omega$ as $ord(j, \Omega)$, with $\Omega(k)$ being the element of $\Omega$ in the $k$-th position, that is, $\Omega(ord(j, \Omega)) = j$. Then, take $\tau' = \Omega(ord(\tau, \Omega) + 1)$, we have
\begin{align*}
s_{t+\tau'-1}(\Omega) &= s_{t+\tau'-1}(\Omega_{\leq \tau'-1})\\
 &= \max\Bigg\{\!s_{t+\tau-1}(\Omega_{\leq \tau-1})\! +\Delta \eta_{c}f(\tau|\Omega_{\leq \tau-1})\\
- &(\tau'-\tau-1)\frac{\Delta \overline{P}^d_e}{\eta_d},\ \underline{S}\Bigg\}\\ 
&=\max\Bigg\{\!\!\min\{\overline{S}, \Delta \eta_c \overline{P}^c_e + s_{t+\tau-1}(\Omega_{\leq \tau-1})\}\\
- &(\tau'-\tau-1)\frac{\Delta \overline{P}^d_e}{\eta_d},\ \underline{S}\Bigg\}
\end{align*}
If we now consider $\overline{\Omega} \subset \mathcal{T}$, $\Omega \subset \overline{\Omega}$, it holds
\begin{align*}
s_{t+\tau'-1}(\overline{\Omega}) &= s_{t+\tau'-1}(\overline{\Omega}_{\leq \tau'-1})\\ &\geq\max\Bigg\{\min\{\overline{S}, \Delta \eta_c \overline{P}^c_e + s_{t+\tau-1}(\overline{\Omega}_{\leq \tau-1})\}\\
- &(\tau'-\tau-1)\frac{\Delta \overline{P}^d_e}{\eta_d},\ \underline{S}\Bigg\}\! \geq\! s_{t+\tau'-1}(\Omega)
\end{align*}
where the first inequality applies because the enlarged set $\overline{\Omega}$ may include charging periods in between $\tau$ and $\tau'$ (otherwise, it would be satisfied with an equality), and the second one results from the induction hypothesis.

Finally, note that

\begin{align*}
s_{t}(\mathcal{T}) & = s_{t}(\{0\}) = \min\{s^* + \Delta\eta_c \overline{P}^c_e, \ \overline{S} \} \\
&\geq s_{t}(\emptyset) = \max\{s^* - \frac{\Delta \overline{P}^d_e}{\eta_d}, \ \underline{S}\}\\
s_{t+1}(\mathcal{T}) &= s_{t+1}(\{0,1\}) = \min\{s^* + 2\Delta\eta_c \overline{P}^c_e,\ \overline{S} \}\\ 
&\geq \min\left\{\max\left\{s^* - \frac{\Delta \overline{P}^d_e}{\eta_d},\  \underline{S}\right\} + \Delta\eta_c \overline{P}^c_e,\ \overline{S}\right\} \\
&= s_{t+1}(\{1\})\\
& \geq s_{t+1}(\emptyset) = \max\left\{s^* - \frac{2\Delta \overline{P}^d_e}{\eta_d},\  \underline{S}\right\}
\end{align*}

Also,
\begin{align*}
s_{t+1}(\mathcal{T}) &= s_{t+1}(\{0,1\}) = \min\{s^* + 2\Delta\eta_c \overline{P}^c_e,\  \overline{S} \}\\ 
&\geq \max\left\{\min\{s^* + \Delta\eta_c \overline{P}^c_e, \overline{S} \} - \frac{\Delta \overline{P}^d_e}{\eta_d},\  \underline{S}\right\}\\
& = s_{t+1}(\{0\})\\
& \geq s_{t+1}(\emptyset)
\end{align*}
Next we analyze the gain $f(\tau|\Omega_{\leq \tau-1} \cup \{\tau'\})$, with $\tau' > \tau$.

Knowing that
$$f(A|B\cup C) = f(C|A\cup B) + f(A|B) - f(C|B)$$
for any $A, B, C \subseteq \mathcal{T}$, it holds
\begin{align*}
 f(\tau|\Omega_{\leq \tau-1} \cup \{\tau'\}) &= f(\tau'|\Omega_{\leq \tau-1} \cup\{\tau\}) + f(\tau|\Omega_{\leq \tau-1}) 
\\
 - f(\tau'|\Omega_{\leq \tau-1}) &\leq  f(\tau|\Omega_{\leq \tau-1})
\end{align*}
due to the fact that the state of charge is monotone non-decreasing.

Now we can take $A = \{\tau\}$, $B = \Omega_{\leq \tau-1} \cup \{\tau'\}$, and $C =\{\tau''\}$, with $\tau'' > \tau'$ to show that
\begin{align*}
 f(\tau|\Omega_{\leq \tau-1} \cup \{\tau', \tau''\}) &= f(\tau''|\Omega_{\leq \tau-1} \cup \{\tau,\tau'\}) \\ 
 + f(\tau|\Omega_{\leq \tau-1}& \cup \{\tau'\}) - f(\tau''|\Omega_{\leq \tau-1}\cup \{\tau'\})\\
 &\leq  f(\tau|\Omega_{\leq \tau-1} \cup \{\tau'\})
\end{align*}
for the very same reason.

We can recursively repeat the above procedure to demonstrate that the gain of element $\tau$ monotonically non-increases as the context grows larger by incorporating new charging periods in the future. In turn, this allows us to conclude that
\begin{align*}
 f(\tau|\Omega_{\leq \tau-1} \cup \{j\}) &= f(j|\Omega_{\leq \tau-1} \cup \{\tau\}) + f(\tau|\Omega_{\leq \tau-1})\\
 - f(j|\Omega_{\leq \tau-1})&\leq  f(\tau|\Omega_{\leq \tau-1})
\end{align*}
for any $j < \tau$, $j \notin \Omega$, because $f(j|\Omega_{\leq \tau-1} \cup \{\tau\}) - f(j|\Omega_{\leq \tau-1}) \leq 0$ (the former involves more future charging periods than the latter). Likewise, for $j < j' <\tau$, with $j, j' \notin \Omega$, we get to ($A = \{\tau\}$, $B = \Omega_{\leq \tau-1} \cup \{j\}$, $C = \{j'\}$)
\begin{align*}
 f(\tau|\Omega_{\leq \tau-1} \cup \{j',j\}) &= f(j'|\Omega_{\leq \tau-1} \cup \{j,\tau\})\\
 + f(\tau|\Omega_{\leq \tau-1} \cup \{j\})&- f(j'|\Omega_{\leq \tau-1}\cup \{j\})\\
 &\leq  f(\tau|\Omega_{\leq \tau-1} \cup \{j\})
\end{align*}
because $f(j'|\Omega_{\leq \tau-1} \cup \{j,\tau\}) - f(j'|\Omega_{\leq \tau-1} \cup \{j\}) \leq 0$. In other words, the gain of element $\tau$ does not increase if we add more charging periods in the past.

Now consider $j > \tau$ and note that
\begin{align*}
&f(\tau|\Omega_{\leq \tau-1} \cup j) =  f(\tau|\Omega_{\leq\tau-1}) + \min\Bigg\{\frac{\overline{S}-s_{t+\tau-1}(\Omega_{\leq\tau-1})}{\Delta\eta_c} +  \\
&\hspace{2cm}(j-\tau)\frac{\overline{P}^d_e}{\eta_c\eta_d} - \frac{\overline{P}^d_e}{\eta_c\eta_d}  - f(\tau|\Omega_{\leq\tau-1}),\ \overline{P}^c_e \Bigg\}\\
& - \min\left\{\frac{\overline{S}-s_{t+\tau-1}(\Omega_{\leq\tau-1})}{\Delta\eta_c} + (j-\tau)\frac{\overline{P}^d_e}{\eta_c\eta_d}, \ \overline{P}^c_e\right\}
\end{align*}
Therefore, $f(\tau|\Omega_{\leq \tau-1} \cup j) \leq f(\tau|\Omega_{\leq \tau-1} \cup j')$ provided that $j' \geq j$. That is, the gain from charging the storage at time $\tau$ monotonically non-increases as the interval between $\tau$ and the subsequent charging period shortens. 

Now, consider $\tau < \tau' < \tau''< \tau'''$ and take $A = \{\tau\}$, $B = \Omega_{\leq \tau-1} \cup \{\tau'\}$ and $C = \{\tau''\}$, we have:
\begin{align*}
 f(\tau|\Omega_{\leq \tau-1} \cup \{\tau', \tau''\}) &= f(\tau''|\Omega_{\leq \tau-1} \cup \{\tau,\tau'\})  
 \\
+ f(\tau|\Omega_{\leq \tau-1} &\cup \{\tau'\}) - f(\tau''|\Omega_{\leq \tau-1} \cup \{\tau'\})\\ 
&\leq  f(\tau|\Omega_{\leq \tau-1} \cup \{\tau'\})\\
&\leq  f(\tau|\Omega_{\leq \tau-1} \cup \{\tau''\}),
\end{align*}
which holds because of the result just proved above and the monotone non-decreasing nature of the state of charge.

Analogously, we have:
\begin{align*}
 f(\tau|\Omega_{\leq \tau-1} \cup \{\tau', \tau'', \tau'''\}) &=f(\tau'''|\Omega_{\leq \tau-1} \cup \{\tau, \tau',\tau''\})  
 \\
+ f(\tau|\Omega_{\leq \tau-1} \cup \{\tau',\tau''\}) - f(&\tau'''|\Omega_{\leq \tau-1} \cup \{\tau',\tau''\})\\
&  \leq f(\tau|\Omega_{\leq \tau-1} \cup \{\tau', \tau''\})\\
& \leq f(\tau|\Omega_{\leq \tau-1} \cup \{\tau''\})\\
&  \leq f(\tau|\Omega_{\leq \tau-1} \cup \{\tau'''\})
\end{align*}
and ($A = \{\tau\}, B= \Omega_{\leq \tau-1} \cup \{\tau'',\tau'''\}$ and $C=\{\tau'\}$)
\begin{align*}
 f(\tau|\Omega_{\leq \tau-1} \cup \{\tau', \tau'', \tau'''\}) &\!=\! f(\tau'|\Omega_{\leq \tau-1} \cup \{\tau,\tau'',\tau'''\})  
 \\
+ f(\tau|\Omega_{\leq \tau-1} \cup \{\tau'',\tau'''\}) - f(&\tau'|\Omega_{\leq \tau-1} \cup \{\tau'',\tau'''\})\\
&  \leq f(\tau|\Omega_{\leq \tau-1} \cup \{\tau'',\tau'''\})
\end{align*}
In a similar way,
\begin{align*}
 f(\tau|\Omega_{\leq \tau-1} \cup \{j,\tau'\}) &\!=\! f(j|\Omega_{\leq \tau-1} \cup \{\tau,\tau'\})  
 \\
+ f(\tau|\Omega_{\leq \tau-1} \cup \{\tau'\}) - f(&j|\Omega_{\leq \tau-1} \cup \{\tau'\})\\
  = f(\tau|\Omega_{\leq \tau-1} \cup \{\tau'\})&+ f(\tau'|\Omega_{\leq \tau-1} \cup \{j,\tau\})\\
  + f(j|\Omega_{\leq \tau-1} \cup \{\tau\})&- f(\tau'|\Omega_{\leq \tau-1} \cup \{\tau\})\\
    - f(j|\Omega_{\leq \tau-1} \cup \{\tau'\})&\leq f(\tau|\Omega_{\leq \tau-1} \cup \{\tau'\})
\end{align*}
for any $j < \tau$, because $f(\tau'|\Omega_{\leq \tau-1} \cup \{j,\tau\}) - f(\tau'|\Omega_{\leq \tau-1} \cup \{\tau\}) \leq 0$ (due to the monotone non-decreasing character of the state of charge) and $f(j|\Omega_{\leq \tau-1} \cup \{\tau\}) -  f(j|\Omega_{\leq \tau-1} \cup \{\tau'\}) \leq 0$ (given that $\tau' > \tau)$.

Note that we can recursively use these strategies to show that having additional charging periods in between charges dos not increase the gain of element $\tau$.

In summary, what we have learned so far is that the gain of an arbitrary element $\tau$ does not increase if more charging periods are added before and/or after $\tau$ or in between two existing charging periods. Consequently, it holds that
$$f(\tau|\Omega_{\leq \tau-1} \cup \Omega_{> \tau} \cup \{j\}) \leq f(\tau|\Omega_{\leq \tau-1} \cup \Omega_{> \tau})$$
for any $\Omega \subseteq \mathcal{T}$, $\tau, j \in \mathcal{T}$, with $\tau \neq j$,
which concludes the proof.

\end{document}